\let\oldmarginpar\marginpar
\renewcommand\marginpar[1]{\-\oldmarginpar[\raggedleft\footnotesize #1]%
{\raggedright\footnotesize #1}}
\theoremstyle{plain}
\newtheorem{thm}[equation]{Theorem}
\newtheorem{lem}[equation]{Lemma}
\newtheorem{prop}[equation]{Proposition}
\newtheorem{cor}[equation]{Corollary}
\theoremstyle{definition}
\newtheorem{defn}[equation]{Definition}
\theoremstyle{remark}
\newtheorem{rem}[equation]{Remark}
\numberwithin{equation}{section}
\newcommand{\R}{\mathbb{R}}
\newcommand{\Rn}{\mathbb{R}^n}
\renewcommand{\phi}{\varphi}
\def\le{\leqslant}
\def\leq{\leqslant}
\def\ge{\geqslant}
\def\geq{\geqslant}
\def\phi{\varphi}
\def\rho{\varrho}
\def\vartheta{\theta}
\newcommand{\Phiw}{\Phi_{\text{\rm w}}}
\newcommand{\Phic}{\Phi_{\text{\rm c}}}
\def\esssup{\operatornamewithlimits{ess\,sup}}
\def\essinf{\operatornamewithlimits{ess\,inf}}
\DeclareMathOperator{\divop}{div}
\renewcommand{\div}{\divop}
\def\loc{{\rm loc}}
\newcommand{\inc}[1]{\hyperref[def:aInc]{{\normalfont(Inc){\ensuremath{_{#1}}}}}}
\newcommand{\dec}[1]{\hyperref[def:aDec]{{\normalfont(Dec){\ensuremath{_{#1}}}}}}
\newcommand{\ainc}[1]{\hyperref[def:aInc]{{\normalfont(aInc){\ensuremath{_{#1}}}}}}
\newcommand{\adec}[1]{\hyperref[def:aDec]{{\normalfont(aDec){\ensuremath{_{#1}}}}}}
\newcommand{\adeci}[1]{\hyperref[def:aDeci]{{\normalfont(aDec){\ensuremath{_{#1}^\infty}}}}}
\newcommand{\azero}{\hyperref[def:a0]{{\normalfont(A0)}}}
\newcommand{\aone}{\hyperref[def:a1]{{\normalfont(A1)}}}
\newcommand{\aones}[1]{\hyperref[def:a1s]{{\normalfont(A1-{\ensuremath{{#1}})}}}}
\date{\today}
\begin{document}

\title
[Unbounded supersolutions with generalized Orlicz growth]
{The weak Harnack inequality for unbounded supersolutions of equations with generalized Orlicz growth}
\author[Benyaiche, Harjulehto, Hästö, and Karppinen]{Allami Benyaiche, Petteri Harjulehto, Peter Hästö, and Arttu Karppinen}

\subjclass[2010]{35J60 (35B65, 31C45)}
\keywords{Generalized Laplacian, supersolution, generalized Orlicz space, Musielak--Orlicz spaces, nonstandard growth, Harnack's inequality, variable exponent, double phase}

\begin{abstract}
We study unbounded weak supersolutions of elliptic partial differential equations with generalized Orlicz 
(Musielak--Orlicz) growth. 
We show that they satisfy the weak Harnack inequality with optimal exponent provided that they belong to 
a suitable Lebesgue or Sobolev space. 
Furthermore, we establish the sharpness of our central assumptions. 
\end{abstract}

\maketitle

%%%%%%%%%%%%%%%%%%%%%%%%%%%%%%%%%%%%%%%%%%%%%%%%%%%%%%%%
%%%%%%%%%%%%%%%%%%%%%%%%%%%%%%%%%%%%%%%%%%%%%%%%%%%%%%%%

\section{Introduction}

We prove the weak Harnack inequality for unbounded supersolutions of partial differential equations 
with generalized Orlicz growth (also known as Musielak--Orlicz growth). 
A general principle states that only intrinsic Harnack inequalities 
are possible for PDEs which are not scaling invariant, and we do indeed find that the constant 
in the weak Harnack inequality depends on the norm of the supersolution. 
With dependence on the $L^s$- or $W^{1,s}$-norms, our result requires that 
$s$ be sufficiently large, namely, $s\ge \max\{s_1,s_2\}$, where $s_1$ depends continuity of the 
generalized Orlicz functional 
and is shown to be sharp and $s_2$ depends on the global growth of the functional and 
does not occur in any previously known special cases.  
The result is new even for the special case of double phase functionals \cite{BarCM18}.
Our framework includes also the following special cases in which the weak Harnack inequality 
was not known before, even for bounded solutions: 
perturbed variable exponent \cite{GP13,LiaSZ18,Ok0,Ok18}, 
Orlicz variable exponent \cite{CapCF18, GPRT17}, 
degenerate double phase \cite{BCM2,BOh2}, 
Orlicz double phase \cite{BOh3}, 
variable exponent double phase \cite{MizNOS20,RagT20}, 
triple phase \cite{DeFO19}, and
double variable exponent \cite{CenRR18, RadRSZ_pp,ZhaR18}. 

Let us introduce the context of this paper. 
Minimizers and (weak) solutions of
\begin{align*}
\inf \int_\Omega F(x, \nabla u) \, dx \quad \text{and}\quad 
-\div(f(x,\nabla u)) = 0
\end{align*}
have been actively studied during recent years when $F$ or $f$ has generalized Orlicz growth. For instance, 
solutions with given boudary values exist \cite{ChlGZ19,GwiSZ18, HarHK16}, 
minimizers or solutions with given boundary values are locally bounded, satisfy Harnack's inequality and belong to $C^{0, \alpha}_{\loc}$ \cite{BenK_pp, HarHT17, WanLZ19} or $C^{1, \alpha}_{\loc}$ \cite{HasO_pp}, 
quasiminimizers satisfy a reverse H\"older inequality \cite{HarHK18}, 
$\omega$-minimizers are locally Hölder continuous \cite{HarHL_pp}, 
minimizers for the obstacle problem are continuous \cite{Kar19} and the boundary Harnack inequality 
holds for harmonic functions \cite{ChlZ_pp}. 
In most cases the assumptions in these results coincides with optimal assumptions in well-known special cases. 
The important special cases are the variable exponent case $F(x,\xi) := |\xi|^{p(x)}$ 
\cite{Alk97, HarHL09, HarKL07, Jul15, Toi12}, the Orlicz case $F(x,\xi) := \phi(|\xi|)$ 
\cite{ArrH18, Lie87, Lie91}, 
and  the double phase case $F(x,\xi) := |\xi|^{p} + a(x) |\xi|^q$ \cite{BarCM15, ColM15a, ColM15b}.
%
%In the variable exponent case, $F(x,t) := |t|^{p(x)}$, such problems have been studied for example in \cite{Alk97, HarHL09, HarKL07, Jul15, Toi12}, in the Orlicz case, $F(x,t) := \phi(|t|)$, for example in \cite{ArrH18, Lie87, Lie91}, 
%and in the double phase case, $F(x,t) := |t|^{p} + a(x) |t|^q$, for example in \cite{BarCM15, ColM15a, ColM15b}.
%\cite{BarCM18, BuyLY19, DeFO19, DuzM10, HarHK18, HasO19, Ok18, RagT20, WanLZ19}. 
The surveys \cite{Chl18, Mar20} include more references of variational problems and partial differential equations of generalized Orlicz growth, while the recent monographs \cite{HarH19, LanM19} present the theory of the 
underlying function spaces. 

In \cite{BenK_pp}, the weak Harnack inequality for bounded supersolutions and Harnack's inequality for solutions were proved by Moser's iteration in the generalized Orlicz case. 
In those results the constants depend on $L^\infty_\loc$-norm of the function. 
In \cite{BarCM15}, Harnack's inequality has been proved in the double phase case, 
and the constant depends on $L^\infty_\loc$-norm of the function. 

We want to study unbounded supersolutions, so we need the constants not to depend on the $L^\infty$-norm. 
In the variable exponent case, the constant in the weak Harnack inequality depends 
on $L^t_\loc$-norm of the function and $t>0$ can be chosen arbitrarily small; furthermore, 
an example shows that the constant in  Harnack's inequality cannot be independent of the function, in contrast to the constant exponent case \cite{HarKL07}. We extend these results to the generalized Orlicz case with sharp assumptions on the continuity of $\phi$. 

We assume that $f:\Omega\times  \Rn \to \Rn$ satisfies the following $\phi$-growth conditions:
\begin{equation}\label{eq:growth} 
\nu \phi(x,|\xi|) \le f(x,\xi)\cdot \xi
\quad\text{and}\quad
|f(x,\xi)|\, |\xi| \le \Lambda \phi(x,|\xi|)
\end{equation}
for all $x\in \Omega$ and $\xi\in\Rn$, and fixed but arbitrary constants $0<\nu \leq \Lambda$.
We are interested in local  (weak) supersolutions:

\begin{defn}
A function $u \in W^{1, \phi}_\loc(\Omega)$ is a \emph{supersolution} if 
\begin{align}
\label{eq:A-eq}
\int_{\Omega} f(x,\nabla u) \cdot \nabla h \, dx \ge 0,
\end{align}
for all non-negative $h \in W^{1, \phi}(\Omega)$ with compact support in $\Omega$.
\end{defn}

We define the limiting exponent 
\[
\ell(p):= \frac{p^*}{p'} = 
\begin{cases}
\frac n{n-p}(p-1) & \quad\text{if }p<n,\\
\infty & \quad\text{if }p\ge n;
\end{cases}
\]
the ratio of the Sobolev exponent $p^*$ and the H\"older exponent $p'$. 
Since $u_p(x):=|x|^{-\frac{n-p}{p-1}}$ is a supersolution of the $p$-Laplace equation 
$-\div(|\nabla u|^{p-2}\nabla u) = 0$ in the unit ball and since $u_p\not\in L^{\ell(p)}(B_1)$, 
we see that $\ell(p)$ is an upper bound on the exponent of integrability of $p$-supersolutions. 

The following is a special case of our main result, Theorem~\ref{thm:weakHarnackGeneral}
together with Propositions \ref{prop:vBound} and \ref{prop:betaBound}. 
Theorem~\ref{thm:weakHarnackGeneral} contains also 
the cases $\|u\|_{L^\omega(B_{2R})}\le d$ and $\|u\|_{W^{1,\omega}(B_{2R})}\le d$ for general 
$\omega\in \Phiw(\Omega)$.
The last statement in the next theorem follows from the example in Section~\ref{sect:example}.
Note that with the choice $s=\infty$ we recover as a special case previously known results for bounded 
solutions \cite{BenK_pp} with the correct, \aones{n} assumption.

\begin{thm}[The weak Harnack inequality]\label{thm:weakHarnack}
Suppose $\phi$ satisfies \azero{}, \ainc{p} and \adec{q}, $1<p\le q<\infty$. 
Let $u$ be a non-negative supersolution to \eqref{eq:A-eq} in $B_{2R}$.
Assume that one of the following holds:
\begin{enumerate}
\item
$\phi$ satisfies \aones{s_*} and $\|u\|_{L^s(B_{2R})}\le d$, where $s_* := \frac{ns}{n+s}$ and $s\in [q-p,\infty]$.
\item
$\phi$ satisfies \aone{} and $\|u\|_{W^{1,\phi}(B_{2R})}\le d$.
\end{enumerate}
Then there exist  positive constants $\ell_0$ and $C$ 
%$\ell_0=\ell_0(p,q,L_p,L_q,\beta,d,n)$ and  $C = C(p, q, L_p, L_q, \beta, d, n)$ 
such that the weak Harnack inequality holds: 
\begin{align*}
\bigg (\fint_{B_{2R}} (u +R)^{\ell_0} \, dx \bigg)^{\frac{1}{\ell_0}}  \le C (\essinf_{B_R} u + R).
\end{align*}
If (1) holds with $s>\max\{\frac np,1\}(q-p)$ or if (2) holds with $p^*>q$, then the weak Harnack inequality 
holds for any $\ell_0<\ell(p)$. 

The \aones{s_*} assumption is sharp, since 
for any $s'<s_*$ if, instead of (1), $\phi$ satisfies \aones{s'} and $\|u\|_{L^s(B_{2R})}\le d$, 
then the weak Harnack inequality need not hold. 
\end{thm}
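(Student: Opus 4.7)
The plan is to establish the inequality via Moser iteration adapted to the generalized Orlicz setting, and to prove sharpness through an explicit variable-exponent counterexample. Working with the shifted variable $v := u+R$, I would test \eqref{eq:A-eq} with $h = \eta^q v^{\beta-1}$ for a smooth cutoff $\eta$ between two concentric balls and for admissible $\beta \ne 0$. The growth conditions \eqref{eq:growth} together with Young's inequality then yield a Caccioppoli-type estimate bounding a $\phi$-energy of $v^{\beta/p}$ on the inner ball by a $v$-integral on the outer ball, with $|\nabla \eta|$ contributing the $1/R$ loss. A Sobolev-type embedding in the $\phi$-framework upgrades this to a reverse-Hölder estimate with gain factor $\sigma = p^*/p > 1$. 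Two branches then follow: iterating over a nested sequence of balls with $\beta > 0$ small produces a positive-power integral bound, while driving $\beta \to -\infty$ delivers the lower bound on $\essinf_{B_R} v$; the two sides are bridged through a logarithmic estimate for $\log v$ (obtained by testing with $h = \eta^q/v$) and a Bombieri--Giusti-type covering argument. The stronger conditions $s > \max\{n/p, 1\}(q - p)$ and $p^* > q$ in the second assertion eliminate the $\ainc{p}$--$\adec{q}$ growth gap during the iteration, which is exactly what is needed for $\ell_0$ to approach the critical exponent $\ell(p)$.

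The central obstacle is the $x$-dependence of $\phi$: every Moser step requires comparing $\phi(x,t)$ with a ball-local frozen model, and the resulting shift errors must collapse into a convergent geometric series across the iteration. This is the precise role of $\aones{s_*}$, and the threshold $s_* = ns/(n+s)$ emerges as the largest exponent for which the $L^s$-norm of $u$, passed through the Sobolev embedding $W^{1,s_*} \hookrightarrow L^s$, can absorb the shift error at each step without destroying the geometric decay; the lower bound $s \ge q-p$ reflects the $\adec{q}$ side of the same comparison. In case (2) the $W^{1,\phi}$-norm plays the role that $L^s$ played before, and the direct gradient control makes the otherwise delicate Sobolev step immediate, which is why no analogue of the $s_*$-threshold appears.

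For the sharpness claim I would construct an explicit variable-exponent model $\phi(x,t) = t^{p(x)}$ on the unit ball, with $p$ continuous and with its modulus of continuity calibrated to satisfy $\aones{s'}$ but just fail $\aones{s_*}$ for the prescribed $s' < s_*$. The candidate supersolution is $u(x) = |x|^{-\alpha}$ with $\alpha$ slightly above $(n - p(0))/(p(0) - 1)$: a direct computation verifies that it is a supersolution of $-\div(|\nabla u|^{p(\cdot)-2}\nabla u) = 0$ on the ball, it lies in $L^s(B_1)$ by the choice of $\alpha$, yet its $L^{\ell_0}$-average on $B_{2R}$ grows faster than its $\essinf_{B_R}$ decays as $R \to 0$, violating any weak Harnack estimate. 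The delicate point is tuning $p$, its modulus, and $\alpha$ simultaneously so that all three constraints---the supersolution property, the $L^s$-bound, and the failure of the Harnack inequality---are jointly realizable; the exponent $s_*$ is precisely the scale at which these constraints lock together, confirming that the assumption is not merely convenient but optimal.
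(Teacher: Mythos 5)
Your positive-direction outline follows the right general scheme (Caccioppoli estimate, Sobolev step, negative- and positive-power Moser iterations, a logarithmic estimate to cross zero), but the step you yourself identify as ``the central obstacle'' --- comparing $\phi(x,\cdot)$ with a ball-local frozen model --- is exactly the step you leave without a mechanism, and the mechanism you hint at (absorbing a ``shift error at each iteration step'' through $W^{1,s_*}\hookrightarrow L^s$ so that a geometric series survives) is not how the argument can be closed. In the paper the test function is not $\eta^q v^{\beta-1}$ but $\psi(v)^{-\ell}\eta^{s}\tilde u$ with an $x$-independent auxiliary function $\psi_r\in\Phic$, $\psi_r\approx\phi^-_{B_r}$ (Definition~\ref{def:psi}); the comparison $\phi(x,v)\approx\psi_r(v)$ is invoked only on the set $\{v<V_R\}$ with $V_R=(\omega_{B_R}^-)^{-1}(d/|B_R|)$, using \aones{\omega}, while the complementary set is controlled by the almost-monotonicity of $t\mapsto\psi(t)^{-\ell-1}\phi(x,t)$ together with the integral conditions \eqref{eq:vBound} and \eqref{eq:betaBound}. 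The norm hypothesis enters once per scale through these two conditions (Propositions~\ref{prop:vBound} and~\ref{prop:betaBound}, via Jensen/Sobolev, with $\omega(t)=t^{s_*}$ chosen precisely so that $\omega^\#(t)=t^{s}$), and this is also where $s\ge q-p$ (needed for \eqref{eq:betaBound} with $\beta=1$, i.e.\ for $t^s$ to satisfy \ainc{q-p}) and the conditions $s>\max\{\frac np,1\}(q-p)$ or $p^*>q$ (needed for \eqref{eq:betaBound} with $\beta>\max\{\frac np,1\}$ in Proposition~\ref{prop:betaCondition}) actually come from --- not from the Caccioppoli step or from ``eliminating the growth gap during the iteration.'' Crossing zero is done by showing $\log(u+R)\in BMO$ and applying John--Nirenberg (Theorem~\ref{thm:weakHarnackGeneral}); your Bombieri--Giusti variant could serve the same purpose, but without the frozen $\psi_r$ and the conditions \eqref{eq:vBound}, \eqref{eq:betaBound} the iteration does not close, so the proposal remains a roadmap rather than a proof.

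The sharpness argument is the genuinely fatal gap: a variable exponent example cannot work. For $\phi(x,t)=t^{p(x)}$ every condition \aones{\sigma}, $\sigma>0$, reduces to $\sup_B (p_B^+-p_B^-)\log\frac1{|B|}<\infty$, i.e.\ to log-H\"older continuity of $p$ (only the constant $\beta$ depends on $\sigma$), so within this class you cannot calibrate a modulus of continuity that satisfies \aones{s'} yet fails \aones{s_*}; moreover, the paper's own variable exponent corollary in Section~\ref{sect:special} shows that under log-H\"older continuity the weak Harnack inequality holds for \emph{every} $s>0$, so no supersolution of a $p(\cdot)$-Laplace type equation with bounded $L^s$-norm can witness its failure. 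The candidate $u(x)=|x|^{-\alpha}$ with $\alpha$ slightly above $\frac{n-p(0)}{p(0)-1}$ does not violate the inequality either: for such a radial decreasing function one has $\essinf_{B_R}u\approx R^{-\alpha}$ and $\big(\fint_{B_{2R}}u^{\ell_0}\,dx\big)^{1/\ell_0}\approx R^{-\alpha}$ whenever $\alpha\ell_0<n$, so it only shows that $\ell_0$ cannot reach $\ell(p)$, which is a different (and already classical) statement. The paper's counterexample is necessarily of double phase type: in dimension one, $\phi(x,t)\approx t^p+a(x)t^q$ with $a(x)=\max\{-x,0\}^\alpha$ admits explicit solutions of $\phi'(x,u')\equiv c$ for which the Harnack constant blows up as $r/x_0\to\infty$ while $\|u\|_{L^s}$ stays bounded exactly when $\alpha<(1+\frac ns)(q-p)$; it is the genuine gap between the exponents $p$ and $q$ --- absent in the variable exponent case --- that makes the construction possible, and this is the point your proposal misses.
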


The next result follows by Corollary~\ref{cor:doublePhase-2} and the counter-example given in 
Section~\ref{sect:example}. Corresponding corollaries could be formulated in the other cases of 
double phase type functionals listed in the beginning of this section. 

\begin{cor}\label{cor:doublePhase}
Let $\phi(x,t):=t^p + a(x)t^q$ be the double phase functional with 
$a^\lambda\in C^{0,\alpha\lambda}(\Omega)$ for some $\lambda>0$. 
Let $u$ be a non-negative supersolution to \eqref{eq:A-eq} in $B_{2R}$. 
If $u\in L^s(\Omega)$ with  
\[
\alpha \ge (\tfrac{n}s+1)(q-p) 
\quad\text{and}\quad
s\in [q-p,\infty],
\]
then there exist  positive constants $\ell_0$ and $C$ 
% $C = C(p, q, \beta, \lambda, n, \|u\|_s)$ 
such that
\begin{align*}
\bigg (\fint_{B_{2R}} (u +R)^{\ell_0} \, dx \bigg)^{\frac{1}{\ell_0}}  \le C (\essinf_{B_R} u + R).
\end{align*}
If additionally $s>\max\{\frac np,1\}(q-p)$, then the inequality holds for any $\ell_0<\ell(p)$. 

The bound on $\alpha$ is sharp, since 
for every $\alpha < (\tfrac{n}s+1)(q-p)$, there exists $a\in C^{0,\alpha}(\Omega)$ for which 
the weak Harnack inequality does not hold.  
\end{cor}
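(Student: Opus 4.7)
The proof splits naturally into the positive direction and the sharpness assertion. For the positive direction, the plan is to deduce it from Theorem~\ref{thm:weakHarnack}(1) via Corollary~\ref{cor:doublePhase-2}. The first step is to record the standard fact that the double phase functional $\phi(x,t)=t^p + a(x)t^q$ satisfies \azero{} (since $\phi(x,1)=1+a(x)$ is bounded from above and below for bounded $a\ge 0$), \ainc{p} (since $\phi(x,t)/t^p = 1 + a(x)t^{q-p}$ is non-decreasing in $t$), and \adec{q} (since $\phi(x,t)/t^q$ is non-increasing in $t$). The decisive step is then to translate the hypothesis $a^\lambda\in C^{0,\alpha\lambda}$ with $\alpha \ge (n/s+1)(q-p)$ into \aones{s_*}. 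Here I would use $s_* = ns/(n+s)$ together with the algebraic identity $(n/s+1)(q-p) = (n/s_*)(q-p)$, which is exactly the scaling that a pure Hölder bound on $a$ on balls of radius $R$ should produce in the definition of \aones{s_*}.

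The main obstacle is carrying out this translation cleanly in the presence of the auxiliary exponent $\lambda$. Passing to $a^\lambda$ is a standard device that accommodates the possibility that $a$ vanishes on large sets; it must be handled by a case split according to whether $a(y)$ is comparable to $R^\alpha$ or much smaller at the point of interest, controlling $|a(x)-a(y)|t^q$ in each regime by the Hölder norm of $a^\lambda$. Once \aones{s_*} is verified, Theorem~\ref{thm:weakHarnack}(1) immediately yields the weak Harnack inequality, and its refined conclusion produces the sharp range $\ell_0 < \ell(p)$ precisely when the additional hypothesis $s > \max\{n/p, 1\}(q-p)$ holds.

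For the sharpness, I would invoke the construction of Section~\ref{sect:example}: for each $\alpha < (n/s+1)(q-p)$, it supplies a coefficient $a \in C^{0,\alpha}(\Omega)$ and a non-negative supersolution $u \in L^s$ whose integrability is incompatible with the weak Harnack estimate, confirming that the lower bound on $\alpha$ cannot be relaxed. Combining both halves yields the corollary in its stated sharp form, and the analogous corollaries for the other double-phase-type functionals listed in the introduction would proceed along identical lines, differing only in the explicit form of the continuity hypothesis that encodes \aones{s_*}.
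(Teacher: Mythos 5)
Your proposal follows essentially the same route as the paper: the positive part is obtained by verifying \azero{}, \ainc{p}, \adec{q} and \aones{s_*} for the double phase functional (with the exponent $\lambda$ absorbed because only the behaviour of $a$ near its zero set matters, cf.\ Remark~\ref{rem:japaneseCase}) and then invoking Corollary~\ref{cor:doublePhase-2}, i.e.\ Theorem~\ref{thm:weakHarnackGeneral} via Propositions~\ref{prop:vBound} and \ref{prop:betaBound}, while the sharpness is exactly the counter-example of Section~\ref{sect:example}. This matches the paper's argument, so the proposal is correct.
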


As the example in Section~\ref{sect:example} shows, the assumption $\alpha \ge (\tfrac{n}s+1)(q-p)$ is sharp 
and this restriction has been previously encountered in \cite{Ok20}, see also \cite{BarCM18}. 
The assumption $s\ge q-p$ is a consequence of considering supersolutions as it can be omitted 
if $u$ is a solution. It is especially interesting to note that such a restriction does not 
occur in the variable exponent case and it is another example that the double phase functional is 
more subtle than the variable exponent case \cite{ColM15a}. 
%Furthermore, the size of $\frac{q-p}s$ seems to be related to the maximal value 
%of $\ell_0$, although the exact bounds are a matter of future research. 

\begin{rem}\label{rem:plusR}
Compared to the classical Harnack inequality, our estimate contains an extra $+R$-term. It is not know 
whether this is necessary, 
but the same phenomenon occurs when $\phi(x,t)= t^{p(x)}$ \cite{Alk97, HarKL07, HarKLMP08}, unless the 
exponent $p$ is assumed to belong to $C^1$ and slightly different Harnack's inequality is used \cite{Jul15}. 
In the Orlicz and the double phase cases the extra $+R$ term is not needed \cite{ArrH18, BarCM15, Lie91}.
\end{rem}

\begin{rem}\label{rem:japaneseCase}
In the case of the basic double phase functional, namely, when $\lambda=1$ in Corollary~\ref{cor:doublePhase}, 
the assumption $\alpha \ge (\frac{n}s+1)(q-p)$ implies $s\ge n(q-p) \ge q-p$ and $s>\frac np(q-p)$, 
since $\alpha\le 1$ and $q\ge 1$. 
Mizuta, Ohno and Shimomura \cite{MizOS20} (see also \cite{HarH_pp}) have considered 
the functional $\phi(x,t):=t^p + (a(x)t)^q$, 
which corresponds to the case $\lambda=\frac 1q$ above. Also in this case, the first assumption 
implies the latter two. However, if $\lambda<\frac1q$, then the first condition can hold while 
the others do not. 

Note that the parameter $\lambda$ does not impact the restrictions in 
Corollary~\ref{cor:doublePhase}. The reason for this is that only the growth of $a$ 
at $a=0$ is important, see \cite[Proposition~7.2.2]{HarH19}.
\end{rem}

%%%%%%%%%%%%%%%%%%%%%%%%%%%%%%%%%%%%%%%%%%%%%%%%%%%%%%%%
%%%%%%%%%%%%%%%%%%%%%%%%%%%%%%%%%%%%%%%%%%%%%%%%%%%%%%%%

\section{Preliminaries}

We briefly introduce our assumptions. More information about $L^\phi$-spaces can be found in \cite{HarH19}. 
\textit{Almost increasing} means that a function satisfies $f(s) \le L f(t)$ for all $s<t$ and some constant $L\ge 1$. 
If there exists a constant $C$ such that $f(x) \leq C g(x)$ for almost every $x$, then we write $f \lesssim g$. If $f\lesssim g\lesssim f$, then we write $f \approx g$.

\begin{defn}
We say that $\phi: \Omega\times [0, \infty) \to [0, \infty]$ is a 
\textit{weak $\Phi$-function}, and write $\phi \in \Phiw(\Omega)$, if 
the following conditions hold:
\begin{itemize}
\item 
For every measurable function $f:\Omega\to \R$ the function $x \mapsto \phi(x, f(x))$ is measurable 
and for every $x \in \Omega$ the function $t \mapsto \phi(x, t)$ is non-decreasing.
\item 
$\displaystyle \phi(x, 0) = \lim_{t \to 0^+} \phi(x,t) =0$  and $\displaystyle \lim_{t \to \infty}\phi(x,t)=\infty$ for every $x\in \Omega$.
\item 
The function $t \mapsto \frac{\phi(x, t)}t$ is $L$-almost increasing on $(0,\infty)$ with $L$ independent of $x$.
\end{itemize}
If $\phi\in\Phiw(\Omega)$ is additionally convex and left-continuous, then $\phi$ is a 
\textit{convex $\Phi$-function}, and we write $\phi \in \Phic(\Omega)$. If $\phi$ does not depend on $x$, then we omit the set  and write $\phi \in \Phiw$ or $\phi \in \Phic$.
\end{defn}

%If $\phi$ does not depend on $x$, then we use notations $\phi \in \Phiw$ and $\phi \in \Phic$.

%Two functions $\phi$ and $\psi$ are \textit{equivalent}, 
%$\phi\simeq\psi$, if there exists $L\ge 1$ such that 
%$\psi(x,\frac tL)\le \phi(x, t)\le \psi(x, Lt)$ for every $x \in \Omega$ and every $t>0$.
%Equivalent $\Phi$-functions give rise to the same space with 
%comparable norms. 

We denote $\phi^+_{A}(t) = \esssup_{x \in A\cap \Omega} \phi(x,t)$ and 
$\phi^-_{A}(t) = \essinf_{x \in A\cap \Omega} \phi(x,t)$. 
%If the subscript is dropped, the supremum of infimum is taken over the whole set $\Omega$.
We define several conditions. See Table~\ref{tab:examples} for an intuition of their meaning 
in special cases. 
Let $p,q,s>0$ and let $\omega:\Omega\times [0,\infty)\to [0,\infty)$ be almost increasing. 
We say that $\phi:\Omega\times [0,\infty)\to [0,\infty)$ satisfies 
\begin{itemize}
\item[(A0)]\label{def:a0}
if there exists $\beta \in(0, 1]$ such that $ \beta \le \phi^{-1}(x,
1) \le \frac1{\beta}$ for a.e.\ $x \in \Omega$, 

\item[(A1-$\omega$)] \label{def:a1s}
if there exists $\beta \in (0,1]$ such that,  for every ball $B$ and a.e.\ $x,y\in B \cap \Omega$,
\[ 
\phi(x,\beta t) \le \phi(y,t) \quad\text{when}\quad  \omega_B^-(t) \in \bigg[1, \frac{1}{|B|}\bigg];
\]

\item[(A1-$s$)]
if it satisfies \aones{\omega} for $\omega(x,t):=t^s$;
 
\item[(A1)]\label{def:a1}
if it satisfies \aones{\phi};

\item[(aInc)$_p$] \label{def:aInc} if
$t \mapsto \frac{\phi(x,t)}{t^{p}}$ is $L_p$-almost 
increasing in $(0,\infty)$ for some $L_p\ge 1$ and a.e.\ $x\in\Omega$;

\item[(aDec)$_q$] \label{def:aDec}
if
$t \mapsto \frac{\phi(x,t)}{t^{q}}$ is $L_q$-almost 
decreasing in $(0,\infty)$ for some $L_q\ge 1$ and a.e.\ $x\in\Omega$.
\end{itemize} 
We say that \ainc{} holds if \ainc{p} holds for some $p>1$, and similarly for \adec{}.
%For simplicity, we denote dependence on these conditions by $c(p,q,\ldots)$ with the understanding 
%that also $L_p$ and $L_q$ affect the constants.
If in the definition of \ainc{p} we have $L_p=1$, then we say that $\phi$ satisfies \inc{p}, 
similarly for \dec{q}.

\begin{table}[h]
\centerline{
\begin{tabular}{l|ccccc}
$\phi(x,t):=$ & \azero{} & \aone{}& \aones{s}& \ainc{} & \adec{} \\
\hline
$\phi(t)$ & $\text{true}$ & $\text{true}$ & $\text{true}$ & $\nabla_2$ & $\Delta_2$ \\
$t^{p(x)}a(x)$ & $a\approx 1$ & $p\in C^{\log}$ & $p\in C^{\log}$ & $p^->1$ & $p^+<\infty$ \\
$t^{p(x)}\log(e+t)$ & $\text{true}$ & $p\in C^{\log}$ & $p\in C^{\log}$ & $p^->1$ & $p^+<\infty$ \\
$t^p + a(x) t^q$ & $a\in L^\infty$ & $a\in C^{0, \frac np (q-p)}$ & $a\in C^{0, \frac ns(q-p)}$ & $p>1$ & $q<\infty$ \\
\end{tabular}
}
\caption{Assumptions in special cases}\label{tab:examples}
\end{table}

We note that \aones{\omega} is a new condition introduced in this article 
to combine \aone{} and \aones{n} as well as other cases. Basically, \aones{\omega} is the 
appropriate assumption if we have \textit{a priori} information that the 
solution is in space $W^{1,\omega}$ or the corresponding Lebesgue or H\"older space. 
The most important cases are $\omega=\phi$ and $\omega(x,t)=t^s$, but we may, for instance, 
consider $\omega=\phi^{1+\epsilon}$ if we have some higher integrability information.

By \cite[Section~4.1]{HarH19}, \azero{} can be stated equivalently as the existence of $\beta \in(0,1]$ such that 
$\phi(x,\beta) \le 1\le \phi(x,1/\beta)$ for almost every $x \in \Omega$. If $\phi$ satisfies \azero{}, then 
\aone{} is equivalent to the condition that  there exists $\beta\in (0,1)$ such that
\[
\beta \phi^{-1}(x, t) \le \phi^{-1} (y, t) 
\quad\text{when}\quad 
t \in \bigg[1, \frac{1}{|B|}\bigg]
\]
for every ball $B$ and a.e.\ $x,y\in B \cap \Omega$ \cite[Section~4.2]{HarH19}. 

\begin{rem}
\label{rem:incdec}
Assume that the derivative $\phi'$ with respect to the second variable exists. Then 
\[
\frac d{dt}\frac{\phi(x,t)}{t^p} = \frac{\phi'(x, t) t^p- p t^{p-1}\phi(x,t)}{t^{2p}}.
\]
If $\phi$ satisfies \inc{p}, then the derivative is non-negative and so
$\frac{t\phi'(x,t)}{\phi(x,t)} \ge p$.
Similarly, $\frac{t\phi'(x,t)}{\phi(x,t)} \le q$ if $\phi$ satisfies \dec{q}.
If, on the other hand, 
\[
p\le \frac{t\phi'(x,t)}{\phi(x,t)} \le q
\] 
then $\frac d{dt}\frac{\phi(x,t)}{t^p}$ is non-negative and 
$\frac d{dt}\frac{\phi(x,t)}{t^q}$ is non-positive and hence \inc{p} and \dec{q} hold. 
Moreover,  the double inequality implies also that $\phi'$ satisfies \ainc{p-1} and \adec{q-1} 
if $\phi$ satisfies \inc{p} and \dec{q}.
\end{rem}

%For a non-decreasing function $f\ge 0$ and $a,b \geq 0$ we have a trivial estimate, 
%Young's inequality (with sub-optimal constant),
%\begin{align}
%\label{eq:cross-est}
%f(a)b \leq f(a)a + f(b)b.
%\end{align}

\begin{defn}\label{def:Lphi}
Let $\phi \in \Phiw(\Omega)$ and define the \textit{modular} 
$\varrho_\phi$ for $u\in L^0(\Omega)$, the set of measurable functions in $\Omega$, by 
  \begin{align*}
    \varrho_\phi(u) &:= \int_\Omega \phi(x, |u(x)|)\,dx.
  \end{align*}
The \emph{generalized Orlicz space}, also called Musielak--Orlicz space, is defined as the set 
  \begin{align*}
    L^\phi(\Omega) &:= \big\{u \in L^0(\Omega) \colon
      \lim_{\lambda \to 0^+} \varrho_\phi(\lambda u) = 0\big\}
  \end{align*}
equipped with the (Luxemburg) norm 
  \begin{align*}
    \|u\|_{L^\phi(\Omega)} &:= \inf \Big\{ \lambda>0 \colon \varrho_\phi\Big(
      \frac{u}{\lambda}  \Big) \leq 1\Big\}.
  \end{align*}
If the set is clear from context we abbreviate $\|u\|_{L^\phi(\Omega)}$ by $\|u\|_{\phi}$.
\end{defn}

We denote by $\phi^*$ the conjugate $\Phi$-function, defined by 
\[
\phi^*(x,t):= \sup_{s\ge 0} (st-\phi(x,s)). 
\]
By this definition, we have Young's inequality $st \le \phi(x,s) + \phi^*(x,t)$. 
H\"older's inequality holds in generalized Orlicz spaces for $\phi\in\Phiw(\Omega)$ with constant $2$
\cite[Lemma~3.2.13]{HarH19}:
\[
\int_\Omega |u|\, |v|\, dx \le 2 \|u\|_\phi \|v\|_{\phi^*(\cdot)}.
\]

\begin{defn}
A function $u \in L^\phi(\Omega)$ belongs to the
\textit{Orlicz--Sobolev space $W^{1, \phi}(\Omega)$} if its weak partial derivatives $\partial_1 u, \ldots, \partial_n u$ exist and belong to the space $L^{\phi}(\Omega)$.
For $u \in W^{1,\phi}(\Omega)$, we define the norm
\[
\| u \|_{W^{1,\phi}(\Omega)} := \| u \|_{L^\phi(\Omega)} + \| \nabla u \|_{L^\phi(\Omega)}.
\]
Here $\| \nabla u \|_{\phi}$ is a abbreviation of $\big{\|} | \nabla u | \big{\|}_{\phi}$.
Again, if $\Omega$ is clear from context, we abbreviate $\| u \|_{W^{1,\phi}(\Omega)}$ by $\| u \|_{1,\phi}$.
\end{defn}

%%%%%%%%%%%%%%%%%%%%%%%%%%%%%%%%%%%%%%%%%%%%%%%%%%%%%%%%%%%%%%%%%%%%%%%%
%%%%%%%%%%%%%%%%%%%%%%%%%%%%%%%%%%%%%%%%%%%%%%%%%%%%%%%%%%%%%%%%%%%%%%%%
%%%%%%%%%%%%%%%%%%%%%%%%%%%%%%%%%%%%%%%%%%%%%%%%%%%%%%%%%%%%%%%%%%%%%%%%

%\section{Caccioppoli inequality and iteration}

We conclude the section by proving an appropriate version of the Caccioppoli inequality.
We denote by $\eta$ a cut-off function in $B_R$, more precisely, 
$\eta \in C_0^{\infty}(B_R)$, $\chi_{B_{\sigma R}} \leq \eta \leq \chi_{B_R}$ 
and $|\nabla \eta| \leq \frac{2}{(1-\sigma)R}$, where $\sigma \in (0, 1)$. Note that the auxiliary function 
$\psi$ is independent of $x$ in the next lemma. This avoids assumptions 
regarding the differentiability of $\psi$ in the space variable, but it does mean that 
the application of the lemma later on is more complicated compared to classical, standard growth 
cases where one simply choses $\psi=\phi$. 

\begin{lem}[Caccioppoli inequality]
\label{lem:caccioppoli}
Suppose $\phi \in \Phiw(\Omega)$ satisfies \azero{} and \adec{q}, 
and let $\psi\in \Phiw$ be differentiable and satisfy \azero{}, 
\inc{p_1} and \dec{q_1}, $p_1, q_1\ge 1$. Let $u$ be a non-negative supersolution 
to equation \eqref{eq:A-eq} and $\eta$ be a cut-off function in $B_R\subset \Omega$. 
For any $\ell > \frac1{p_1}$ and $s\ge q$, 
\begin{align*}
\int_{B_R} \phi(x, |\nabla u|) \psi( \tfrac{u+R}{R} )^{-\ell} \eta^s \, dx 
\leq 
\big(\tfrac{c(L_q) s\Lambda}{(1-\sigma)(p_1\ell-1)\nu}\big)^q \int_{B_R} \psi (\tfrac{u+R}{R} )^{-\ell} \phi(x,\tfrac{u+R}{R} ) \eta^{s-q} \, dx.
\end{align*}
\end{lem}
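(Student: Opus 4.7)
The plan is to test \eqref{eq:A-eq} against a function of the form $h := R\eta^s \Psi(v)$, where $v := (u+R)/R \ge 1$ and $\Psi$ is a primitive of $-\psi^{-\ell}$, and then to combine the $\phi$-growth bounds \eqref{eq:growth} with a weighted Young's inequality calibrated to the cut-off $\eta$ in order to absorb the gradient term that ends up on the right.

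\emph{Step 1 (test function).} Define $\Psi(v) := \int_v^\infty \psi(\tau)^{-\ell}\,d\tau$ for $v \ge 1$. Since $\psi$ satisfies \inc{p_1} and $p_1\ell > 1$, the inequality $\psi(\tau) \ge \psi(v)(\tau/v)^{p_1}$ for $\tau\ge v$ implies that the integral converges and
\[
\Psi(v) \le \frac{v\,\psi(v)^{-\ell}}{p_1\ell-1}.
\]
Because $\psi$ is differentiable and positive, $\Psi$ is $C^1$ on $[1,\infty)$ with $\Psi'(v) = -\psi(v)^{-\ell}$, and $\Psi$ is bounded and Lipschitz. Consequently $h := R\eta^s \Psi(v)$ is a non-negative element of $W^{1,\phi}(\Omega)$ with compact support in $B_R$, and
\[
\nabla h = sR\eta^{s-1}\Psi(v)\,\nabla\eta - \eta^s\psi(v)^{-\ell}\,\nabla u.
\]

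\emph{Step 2 (insert into the supersolution inequality).} Substituting $h$ into \eqref{eq:A-eq}, rearranging, and using \eqref{eq:growth} ($\nu\phi(x,|\nabla u|)\le f(x,\nabla u)\cdot\nabla u$ and $|f(x,\nabla u)|\le \Lambda\phi(x,|\nabla u|)/|\nabla u|$) together with $|\nabla\eta|\le 2/((1-\sigma)R)$ and the bound on $\Psi$ from Step~1, the $R$-factors cancel and I obtain
\[
\nu\!\int_{B_R}\!\eta^s\psi(v)^{-\ell}\phi(x,|\nabla u|)\,dx \le \frac{2s\Lambda}{(1-\sigma)(p_1\ell-1)}\!\int_{B_R}\!\eta^{s-1}\psi(v)^{-\ell}\,v\,\frac{\phi(x,|\nabla u|)}{|\nabla u|}\,dx.
\]

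\emph{Step 3 (weighted Young's inequality).} The key estimate, which holds pointwise for any $\delta\in(0,1]$, is
\[
\eta^{s-1}\,v\,\frac{\phi(x,|\nabla u|)}{|\nabla u|} \le \delta\,\eta^s\phi(x,|\nabla u|) + c(L_q)\,\delta^{-(q-1)}\,\eta^{s-q}\phi(x,v).
\]
I would prove it by applying Young's inequality to the product $(\mu v)\cdot(\mu^{-1}\phi(x,|\nabla u|)/|\nabla u|)$ with $\mu := \bigl(c(L_q)/\delta\bigr)^{1/q'}\eta^{-1/q'}$, and then using \adec{q} of $\phi$ to estimate $\phi(x,\mu v)$ and the corresponding \ainc{q'} of $\phi^*$, together with the standard consequence $\phi^*(x,\phi(x,t)/t)\lesssim \phi(x,t)$ of \adec{q}, to estimate the conjugate term. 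The exponent $a = 1/q'$ is forced by the requirement that the absorbing term carries exactly $\eta^s$, which then automatically produces $\eta^{s-1-q/q'}=\eta^{s-q}$ on the remainder.

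\emph{Step 4 (absorption).} Multiplying the estimate of Step~3 by $\psi(v)^{-\ell}$ and integrating, then choosing $\delta$ so that $\delta\cdot 2s\Lambda/((1-\sigma)(p_1\ell-1)) = \nu/2$ in Step~2, absorbs the $\phi(x,|\nabla u|)$-term into the left-hand side. Bookkeeping the resulting constant gives exactly $\bigl(c(L_q)s\Lambda/((1-\sigma)(p_1\ell-1)\nu)\bigr)^q$, since $\delta^{-(q-1)}$ contributes $q-1$ powers and the prefactor $s\Lambda/((1-\sigma)(p_1\ell-1)\nu)$ contributes one more. Recalling $v=(u+R)/R$ then yields the claim.

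The main obstacle is the calibration in Step~3: the test-function calculation naturally yields $\eta^{s-1}$, whereas the conclusion needs $\eta^s$ on the absorbing side and $\eta^{s-q}$ on the data side, with a simultaneously small absorbing constant $\delta$. Tying the scaling parameter to $\eta^{-1/q'}$ is what makes the three requirements compatible, and it is also the reason why the exponent $q$ (and not, say, $q_1$ or $p$) appears in the final constant.
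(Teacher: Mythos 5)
Your argument is correct, and it differs from the paper's only in the choice of test function. The paper tests \eqref{eq:A-eq} with $h:=\psi(v)^{-\ell}\eta^{s}\tilde u$ (where $\tilde u=u+R$, $v=\tilde u/R$), which produces the three-term gradient $\nabla h$ and extracts the factor $p_1\ell-1$ from the bracket $\ell\psi'(v)v-\psi(v)$ via the inequality $p_1\psi(t)\le t\psi'(t)$ of Remark~\ref{rem:incdec}; you instead test with the primitive $h:=R\eta^{s}\Psi(v)$, $\Psi(v)=\int_v^\infty\psi(\tau)^{-\ell}\,d\tau$, so that $\nabla h$ has only two terms and the same hypothesis \inc{p_1} with $p_1\ell>1$ enters through the bound $\Psi(v)\le \frac{v\,\psi(v)^{-\ell}}{p_1\ell-1}$. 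From that point on the two proofs coincide: the growth conditions \eqref{eq:growth}, $R|\nabla\eta|\le\frac{2}{1-\sigma}$, Young's inequality with a cutoff-dependent parameter (your $\mu\approx(\delta^{-1}\eta^{-1})^{1/q'}$ is exactly the paper's $\epsilon=\frac{1-\sigma}{2C}\eta(x)$ in disguise), \adec{q} of $\phi$, \ainc{q'} of $\phi^*$, $\phi^*(x,\phi(x,t)/t)\le\phi(x,t)$, and absorption. What your route buys is a cleaner test function (no differentiation of $\psi(v)^{-\ell}$, and in fact differentiability of $\psi$ is not really needed for it), at the price of having to justify convergence and boundedness of the improper integral $\Psi$, which you do correctly. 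Two shared small caveats, identical in the paper's own proof and therefore not gaps in yours: the one-sided scalings \adec{q} of $\phi$ and \ainc{q'} of $\phi^*$ require the Young parameter to lie in the admissible regime (your $\delta\le1$, the paper's $\epsilon\le1$), so for very large $\ell$ one caps the parameter and obtains the bound in the weaker but sufficient form $C(s,\ell,\nu,\Lambda)(1-\sigma)^{-q}$; and the absorption step tacitly uses that the left-hand integral is finite, which follows from $u\in W^{1,\phi}_\loc$ together with \adec{q}.
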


\begin{proof}
Let us simplify the notation by denoting $\tilde u := u + R$ and $v := \frac{\tilde u}{R}$. 
Since $\nabla u = \nabla \tilde u$, we see that $\tilde u$ is still a supersolution. 
Since $v\ge 1$, $\psi(v)^{-\ell} \eta^{s}\le c_1$ by \azero{} and \inc{p_1}. 

We would like to test equation \eqref{eq:A-eq} with $h := \psi(v)^{-\ell} \eta^{s} \tilde u$. Let us first check that $h$ is a valid test function, that is $h \in W^{1,\phi}(B_R)$ and has compact support in $\Omega$. As $\tilde u \in L^{\phi}(B_R)$ and $|h|\le c_1 \tilde u$ it is immediate that $h \in L^{\phi}(B_R)$. By a direct calculation we have
\begin{align*}
%\label{eq:test-grad}
\begin{split}
\nabla h 
&= -\ell \psi(v)^{-\ell-1} \eta^{s} \tilde u \psi'(v) \nabla v + 
s \psi(v)^{-\ell} \eta^{s-1} \tilde u \nabla \eta + \psi(v)^{-\ell} \eta^s \nabla \tilde u. % \\
%&= \psi(v)^{-\ell-1} \eta^{s} [-\ell \psi'(v) v + \psi(v)] \nabla \tilde u 
%+ s\psi(v)^{-\ell} \eta^{s-1} \tilde u \nabla \eta .
\end{split}
\end{align*}
Note that $\tilde u \nabla v =  v \nabla \tilde u$. We use Remark~\ref{rem:incdec} and get
\begin{align*}
\big|\ell \psi(v)^{-\ell-1} \eta^{s}  \psi'(v) v \nabla \tilde u \big|
\leq 
q_1\ell c_1 |\nabla \tilde u| \in L^{\phi}(B_R).
\end{align*}
For the third term, we obtain $|\psi(v)^{-\ell} \eta^{s} \nabla \tilde u |
\leq  c_1 |\nabla \tilde u| \in L^{\phi}(B_R)$.
The term with $\nabla \eta$ is treated as $h$ itself. Thus $h \in W^{1, \phi}(B_R)$.
Since $s>0$ and $\eta\in C^\infty_0(B_R)$, $h$ has compact support in 
$\Omega$ and so it is a valid test-function. 

We next calculate
\begin{align*}
f(x,\nabla \tilde u) \cdot \nabla h 
&=\psi(v)^{-\ell-1} \eta^{s} [-\ell \psi'(v) v + \psi(v)] f(x,\nabla \tilde u) \cdot \nabla \tilde u\\ 
&\qquad+ s\psi(v)^{-\ell} \eta^{s-1} \tilde u \, f(x,\nabla \tilde u) \cdot \nabla \eta .
\end{align*}
Since $\tilde u$ is a supersolution, we have
$\int_{B_R} f(x, \nabla \tilde u) \cdot \nabla h \, dx \ge 0$,
which implies with the growth conditions \eqref{eq:growth} that 
\begin{align*}
\nu \int_{B_R} \phi(x, |\nabla \tilde u|) \psi(v)^{-\ell-1} \eta^{s}
[\ell  \psi'(v) v - \psi(v)]
\, dx \le
s \Lambda \int_{B_R} \tfrac{\phi(x, |\nabla \tilde u|)}{|\nabla \tilde u|} |\nabla \eta| \, \eta^{s-1}\psi(v)^{-\ell}\tilde u \, dx.
\end{align*}
By Remark~\ref{rem:incdec}, $p_1 \psi(t) \le \psi'(t) t$ so we conclude that 
\begin{align*}
& [p_1\ell- 1]\nu \int_{B_R} \phi(x, |\nabla \tilde u|) \psi(v)^{-\ell} \eta^{s}\, dx 
\le
s\Lambda \int_{B_R} \tfrac{\phi(x, |\nabla \tilde u|)}{|\nabla \tilde u|}|\nabla \eta |\, \eta^{s-1}\psi(v)^{-\ell}\tilde u
\, dx.
\end{align*}
Here $p_1\ell- 1$ is positive, since $\ell> \frac1{p_1}$.

Recalling that $|\nabla \eta| \tilde u \leq \frac{2}{(1-\sigma)R}\tilde u
=\frac{2}{1-\sigma} v$, we arrive at
\begin{align*}
\int_{B_R} \phi(x, |\nabla \tilde u|)  \psi(v)^{-\ell} \eta^{s} \, dx
\leq 
\frac{C}{1-\sigma} \int_{B_R} \tfrac{\phi(x, |\nabla \tilde u|)}{|\nabla \tilde u|} v \psi(v)^{-\ell} \eta^{s-1}\, dx.
\end{align*}
By Young's inequality \cite[(2.4.2)]{HarH19}
\begin{equation*}%\label{equ:super-cacc-1}
\tfrac{\phi(x, |\nabla \tilde u|)}{|\nabla \tilde u|} v 
\le 
\phi\big(x, \epsilon^{-\frac1{q'}} v\big)
+  \phi^*\big(x, \epsilon^{\frac1{q'}} \tfrac{\phi(x, |\nabla \tilde u|)}{|\nabla \tilde u|}\big). 
%\le  \epsilon \phi(x,|\nabla u|) + \epsilon^{1-q} \phi(x,v).
\end{equation*}
For the first term on the right hand side we use \adec{q} of $\phi$.
For the second term we first  use
\ainc{q'} of $\phi^*$ \cite[Proposition 2.4.9]{HarH19} and then
$\phi^*(\frac{\phi(t)}t) \le \phi(t)$ (comment after \cite[Theorem 2.4.10]{HarH19}), and obtain
\begin{equation*}
\tfrac{\phi(x, |\nabla \tilde u|)}{|\nabla \tilde u|} v 
\lesssim
\epsilon^{1-q} \phi(x,v) + \epsilon \phi(x,|\nabla u|) .
\end{equation*}

Finally, we choose $\epsilon:=\frac{1-\sigma}{2C}\eta(x)$ and so 
\begin{align*}
&\int_{B_R} \phi(x, |\nabla \tilde u|)  \psi(v)^{-\ell} \eta^{s} \, dx\\
 &\quad\leq \frac{1}{2} \int_{B_R} \phi(x, |\nabla \tilde u|) \psi(v)^{-\ell} \eta^{s} \, dx 
 + \frac{C}{(1-\sigma)^q} \int_{B_R} \psi(v)^{-\ell} \phi(x,v) \eta^{s-q} \, dx.
\end{align*}
The first term on the right-hand side can be absorbed in the left-hand side. This gives the claim. 
\end{proof}

%%%%%%%%%%%%%%%%%%%%%%%%%%%%%%%%%%%%%%%%%%%%%
%%%%%%%%%%%%%%%%%%%%%%%%%%%%%%%%%%%%%%%%%%%%%
%%%%%%%%%%%%%%%%%%%%%%%%%%%%%%%%%%%%%%%%%%%%%

\section{The weak Harnack inequality}

The plan of the proof follows the usual scheme of Moser iteration. 
We first show that the infimum is bounded below by an integral-mean with negative power. 
We then prove a reverse H\"older-type inequality for positive exponents below a certain 
threshold. These steps are proved by iteration. Jumping over zero is the final piece and it is 
accomplished by the John--Nirenberg lemma. 

We next define a 
differentiable approximation of $\phi_{B_R}^-$ with nice growth properties. 
We assume that $\phi\in \Phiw(\Omega)$ satisfies \adec{} and \ainc{p}, $p\ge 1$.

\begin{defn}\label{def:psi}
We define an auxiliary weak $\Phi$-function $\psi_r$ by setting $\psi_r(0):=0$ and 
\[
\psi_r(t):= \int_0^t \tau^{p-1}\sup_{s\in (0, \tau]} \frac{\phi_{B_r}^-(s)}{s^p} \, d\tau
\quad\text{for }t>0.
\] 
\end{defn}

Then $\frac{\psi'_r(t)}{t^{p-1}} = \sup_{s\in (0, t]} \frac{\phi_{B_r}^-(s)}{s^p}$ is increasing and positive
so that $\psi_r'$ satisfies \inc{p-1} and $\psi_r$ is convex, strictly increasing and satisfies \inc{p}. 
%Note that if $p=1$ then we obtain that $\psi'_r$ is increasing, and thus $\psi$ is convex and hence satisfies \ainc{1}.
Since $\phi^-_{B_r}$ satisfies \ainc{p} and \adec{}, the integrand is finite in every point, and so 
$\psi_r$ is continuous. Thus $\psi_r \in \Phic$. 
Further, by \adec{},
\[
\psi_r(t) \ge  \int_{t/2}^t \tau^{p-1}\sup_{s\in (0, \tau]} \frac{\phi_{B_r}^-(s)}{s^p} \, d\tau
\gtrsim \frac{\phi_{B_r}^-(t/2)}{(t/2)^p} \Big( \frac{t}2 \Big)^p \approx \phi_{B_r}^-(t),
\]
and since $\phi^-_{B_r}$ satisfies \ainc{p} we obtain
\[
\psi_r(t) \lesssim \int_0^t \tau^{p-1} \frac{\phi_{B_R}^-(t)}{t^p} \, d\tau = \tfrac1p \phi_{B_r}^-(t).
\]
Thus $\psi_r(t)\approx \phi_{B_r}^-(t)$. 
It follows that $\psi_r$ satisfies \adec{q}, and since it is convex, 
it satisfies \dec{} \cite[Lemma~2.2.6]{HarH19}. 
Therefore, $\psi_r$ satisfies the assumptions of Lemma~\ref{lem:caccioppoli} with
$p_1=p$; $q_1$ is a function of $q$, but it does not affect the constant. 
See Proposition~\ref{prop:vBound} for a simpler, sufficient condition for \eqref{eq:vBound}. 

\begin{thm}\label{thm:essinf}
Suppose $\phi\in \Phiw(\Omega)$ satisfies \azero{}, \ainc{p} and \adec{q}, $1\le p\le q$. 
Let $u$ be a nonnegative supersolution to \eqref{eq:A-eq} in $B_R$. 
Let $\omega \in \Phi_w(\Omega)$ satisfy \azero{} and \adec{}. 
Assume that $\phi$ satisfies \aones{\omega}, and 
\begin{equation}\label{eq:vBound}
\omega_{B_R}^-\bigg(\fint_{B_R} \frac{u+R}R\, dx\bigg) \le \frac d{|B_R|}
\end{equation}
for some $d>0$. For any $\ell > 0$, there exists a constant $C_{\ell,d} = C(p, q, L_p, L_q, n, \ell,d)>0$ 
such that
\begin{align*}
\essinf_{B_{R/2}} u+R 
\geq 
C_{\ell,d}\Big(\fint_{B_R} (u+R)^{-\ell} \, dx  \Big)^{-\frac{1}{\ell}}.
\end{align*}
\end{thm}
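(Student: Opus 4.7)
The proof is by Moser iteration with negative exponents, applied to the auxiliary function $v := (u+R)/R \geq 1$.

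\emph{Step 1 (Caccioppoli plus replacement of $\phi(x, \cdot)$).} I would apply Lemma~\ref{lem:caccioppoli} with $\psi := \psi_R$ from Definition~\ref{def:psi}, which is convex, satisfies \inc{p} and \dec{} with appropriate exponents, and verifies $\psi_R \approx \phi_{B_R}^-$. This yields, for $\ell > \tfrac{1}{p}$ and $s \geq q$,
\[
\int_{B_R} \phi(x, |\nabla u|) \psi_R(v)^{-\ell} \eta^s \, dx
\lesssim
\tfrac{1}{(1-\sigma)^q} \int_{B_R} \psi_R(v)^{-\ell} \phi(x, v) \eta^{s-q} \, dx.
\]
The crucial move is to replace $\phi(x, v)$ on the right by $\phi_{B_R}^-(v) \approx \psi_R(v)$. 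I decompose $B_R$ into the "good set" $E_1 := \{x : \omega_{B_R}^-(v(x)) \leq 1/|B_R|\}$, on which condition \aones{\omega} applies (using $v \geq 1$) and gives $\phi(x, v) \lesssim \phi_{B_R}^-(v)$, and the "bad set" $E_2 := B_R \setminus E_1$. The contribution from $E_2$ is handled by combining the bound \eqref{eq:vBound} with Chebyshev's inequality and the almost monotonicity/\adec{} of $\omega$ in order to control $|E_2|$ and the corresponding integral; all constants arising from this estimate are absorbed into a constant $C_d$ depending on $d$.

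\emph{Step 2 (Reverse H\"older).} Set $w := \psi_R(v)^{(1-\ell)/p}$. The chain rule, together with $\psi_R'(v)\, v \approx \psi_R(v)$ from Remark~\ref{rem:incdec} and the lower bound on $\phi(x, |\nabla u|)$ coming from \ainc{p} of $\phi$, converts the estimate of Step~1 into
\[
\int_{B_R} |\nabla w|^p \eta^s \, dx \leq \tfrac{C_d}{(1-\sigma)^q R^p} \int_{B_R} w^p \eta^{s-q} \, dx.
\]
Applying the classical Sobolev--Poincar\'e inequality to $w \eta^{s/p}$ (extended by zero), and translating back between $\psi_R(v)$ and $v$ via \azero{}, \ainc{p}, \adec{q}, gives a reverse H\"older inequality
\[
\Bigl(\fint_{B_{\sigma R}} v^{-\ell \kappa}\, dx\Bigr)^{1/\kappa}
\leq
\tfrac{C_d}{(1-\sigma)^q} \fint_{B_R} v^{-\ell}\, dx
\]
with gain $\kappa := p^*/p > 1$ when $p<n$, and an arbitrary $\kappa > 1$ when $p \geq n$.

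\emph{Step 3 (Iteration).} Iterate the reverse H\"older estimate on a decreasing sequence of balls $B_{R_k}$ with $R_k \searrow R/2$ and exponents $\ell_k := \ell \kappa^k$; telescoping the resulting geometric series of constants yields
\[
\|v^{-1}\|_{L^\infty(B_{R/2})} \leq C_{\ell,d} \Bigl(\fint_{B_R} v^{-\ell}\Bigr)^{1/\ell},
\]
which, after undoing $v = (u+R)/R$, is the claimed essinf lower bound. The remaining range $\ell \in (0, 1/p]$ then follows from the case $\ell = 1/p + 1$ by H\"older's inequality.

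The main obstacle is Step 1, namely the quantitative replacement of $\phi(x, v)$ by $\phi_{B_R}^-(v)$ on the part of $B_R$ lying outside the range $\omega_{B_R}^-(t) \in [1, 1/|B_R|]$ covered by \aones{\omega}. This is precisely where the bound \eqref{eq:vBound} is used and where the dependence of $C_{\ell, d}$ on $d$ arises; the remaining steps are routine adaptations of the classical negative-exponent Moser scheme.
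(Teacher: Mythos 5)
Your overall strategy (Caccioppoli with the frozen function $\psi_R\approx\phi^-_{B_R}$, replacement of $\phi(x,v)$ by $\psi_R(v)$ via a good/bad set split at the \aones{\omega} threshold, Sobolev, negative-exponent Moser iteration) is the same as the paper's, but two steps as you describe them do not work. First, the bad-set estimate: controlling $|E_2|$ by Chebyshev and \adec{} of $\omega$ does not control $\int_{E_2}\psi_R(v)^{-\ell-1}\phi(x,v)\,dx$, because on $E_2$ the ratio $\phi(x,v)/\phi^-_{B_R}(v)$ is exactly what \aones{\omega} fails to bound, and the quantity you must compare against, $\fint_{B_R}\psi_R(v)^{-\ell}\,dx$, can itself be very small; a smallness-of-measure argument gives no such comparison. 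The paper instead gets a \emph{pointwise} bound on the bad set from the fact that $t\mapsto\psi(t)^{-\ell-1}\phi(x,t)$ is almost decreasing when $(\ell+1)p\ge q$, so that on $\{v\ge V_R\}$ the integrand is $\lesssim\psi(V_R)^{-\ell-1}\phi(x,V_R)\approx\psi(V_R)^{-\ell}$, and then uses \eqref{eq:vBound} together with Jensen's inequality for the convex function $t\mapsto\psi(t)^{-\ell}$ to get $\psi(V_R)^{-\ell}\lesssim\fint\psi(v)^{-\ell}\,dx$, i.e.\ the bad-set term is absorbed into the reverse-H\"older right-hand side, not into a free constant. Note that this mechanism forces $\ell$ to be large (in the paper $(\ell+1)p\ge q$, $s=\ell-(n-1)q\ge q$, $\ell\ge nq$), so the iteration only yields the theorem directly for large exponents.

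This makes your final reduction the second, decisive gap: the claim for $\ell\in(0,1/p]$ does \emph{not} follow from the case of a larger exponent ``by H\"older's inequality''---the power-mean inequality goes the other way, since $\big(\fint(u+R)^{-\ell_2}\big)^{-1/\ell_2}\le\big(\fint(u+R)^{-\ell_1}\big)^{-1/\ell_1}$ for $\ell_1<\ell_2$, so the small-exponent statement is the stronger one. The paper closes this with a separate argument: iterating the large-exponent estimate on radii $r_k=(1-2^{-k})R$ and using $\essinf_{B_r}(u+R)\ge R$ to tame the product $\prod_k\Lambda_k^{Q^k}$. Without that (or an equivalent device), your proof only gives the theorem for $\ell$ above a threshold depending on $n,p,q$. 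A minor further point: your Step 2 reduction to $\int|\nabla w|^p\eta^s\,dx$ via ``the lower bound on $\phi(x,|\nabla u|)$ from \ainc{p}'' is not justified as stated ($\phi(x,t)\gtrsim t^p$ fails for small $t$, and there is an $R$-scaling issue); the correct way is Young's inequality with $\xi(t):=\psi(t^{1/\gamma})$ as in Proposition~\ref{prop:betaCondition}, or the $W^{1,1}$-Sobolev--Poincar\'e route the paper takes in this theorem.
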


\begin{proof}
Let us assume that $r\in [\frac{R}{2},R]$ and 
denote $ \tilde u := u+R, v:= \frac{\tilde u}{r}$ and $n' := \frac{n}{n-1}$. 
Let $\psi_r \in \Phic$ be as in Definition~\ref{def:psi} in the ball $B_r$, 
and abbreviate $\psi := \psi_r$.
Let  $s>0$ be a constant that will be fixed later. 
We use the $W^{1,1}$-Sobolev--Poincar\'e inequality for the function $\psi(v)^{-\ell} \eta^{s}$, 
where $\eta \in C_0^\infty (B_r)$ is a cut-off function as before. 
We see that $\psi(v)^{-\ell}\eta^{s} \in W^{1,1}(\Omega)$  and it has  a compact support in $\Omega$ by the same arguments as in the proof of the Caccioppoli inequality (Lemma~\ref{lem:caccioppoli}). 
The Sobolev--Poincar\'e inequality gives us
\begin{equation*}%\label{equ:S-P}
\begin{split}
\Big (\int_{B_r} \psi(v)^{-\ell n'} \eta^{sn'} \, dx \Big )^{\frac{1}{n'}} 
&\lesssim \int_{B_r} |\nabla(\psi(v)^{-\ell} \eta^{s})| \, dx \\
&\le \int_{B_r} \psi(v)^{-\ell-1} \eta^{s-1} [ s\psi(v) |\nabla \eta|  + \ell \eta |\nabla \psi(v)| ]\, dx.
\end{split}
\end{equation*}
By the definition of $\psi$ and \ainc{p} of $\phi$, 
$\psi'(t) \approx \frac1t \phi^-_{B_r} (t) \le \frac1t \phi(x,t)$ for $x \in B_r$. Thus 
\[
|\nabla \psi(v)| 
= 
|\psi'(v) \tfrac{1}{r} \nabla \tilde u|
\lesssim
\tfrac{1}{rv} \phi(x,v) |\nabla \tilde u| 
\] 
almost everywhere in $B_r$. We use this and the estimate $|\nabla \eta| \leq \frac{2}{(1-\sigma)r}$:
\begin{align}
\label{eq:after-sob}
 \Big(\int_{B_r} \psi(v)^{-\ell n'} \eta^{sn'} \, dx  \Big)^{\frac{1}{n'}}  
 &\leq 
\frac {C(s+\ell)}{r} 
\int_{B_r} \psi(v)^{-\ell-1} \eta^{s-1} 
\big[\tfrac{\psi(v)}{1-\sigma}  + \eta \tfrac{1}{v}  \phi(x,v) |\nabla \tilde u| \big]\, dx .
\end{align}

By Young's inequality \cite[(2.4.1)]{HarH19} and $\phi^*(x,\frac{1}{t}\phi(x,t))\le \phi(x,t)$ 
\cite[p.~35]{HarH19}, we have 
\[
\tfrac{1}{v} \phi(x,v) |\nabla \tilde u| 
\le \phi^*\big(x,\tfrac{1}{v} \phi(x,v)\big) + \phi(x,|\nabla \tilde u|)
\le 
\phi(x,v) + \phi(x,|\nabla \tilde u|). 
\]
This and the Caccioppoli inequality (Lemma \ref{lem:caccioppoli}) for $u+R-r$ in $B_r$ yield
\[
\begin{split}
\int_{B_r} \psi(v)^{-\ell-1} \eta^{s} \tfrac{1}{v}\phi(x,v) |\nabla \tilde u| \, dx
&\le   \int_{B_r} \psi(v)^{-\ell-1} \eta^{s} \big(\phi(x,v) + \phi(x,|\nabla \tilde u|)\big) \, dx\\
&\le 
\frac C{(1-\sigma)^q} \int_{B_r} \psi(v)^{-\ell-1} \eta^{s-q}  \phi(x,v)\, dx,
\end{split}
\]
where we assumed that $(\ell+1)p >1$ and $s\ge q$ and used $\eta^{s} \le \eta^{s-q}$. 
We next divide \eqref{eq:after-sob} by $r^{n-1}$, use this estimate as well as  
$\eta^{s-1} \le \eta^{s-q}$ and $\psi(t) \lesssim \phi(x,t)$:
\begin{align*} %\label{eq:after-sob-2}
\Big(\fint_{B_r} \psi(v)^{-\ell n'} \eta^{sn'} \, dx  \Big)^{\frac{1}{n'}}  
&\leq 
\frac{C(s+\ell)}{(1-\sigma)^q} 
\fint_{B_r} \psi(v)^{-\ell-1} \eta^{s-1}\psi(v)  + \psi(v)^{-\ell-1} \eta^{s-q}  \phi(x,v)\, dx\\
&\lesssim
\frac {s+\ell}{(1-\sigma)^q} 
\fint_{B_r} \psi(v)^{-\ell-1} \eta^{s-q}  \phi(x,v)\, dx.
\end{align*}
 
Let us denote $V_R:= (\omega_{B_R}^-)^{-1}(d/|B_R|)$ and $E:= \{v(x) < V_R\}$. 
Since $v\ge 1$, we find by \azero{}, \adec{} and \aones{\omega} that $\phi(x, v) \approx \psi(v)$ 
in $E$. Hence
\[
\frac{1}{|B_r|}\int_{B_r \cap E} \psi(v)^{-\ell-1} \eta^{s-q} \phi(x,v)\, dx 
\approx 
\frac{1}{|B_r|} \int_{B_r \cap E} \psi(v)^{-\ell} \eta^{s-q} \, dx.
\]
Since $\phi$ satisfies \adec{q} and $\psi$ satisfies \ainc{p}, we see that 
$t\mapsto \psi(t)^{-\ell-1}\phi(x,t)$ is almost decreasing when $(\ell+1)p\ge q$. 
Then when $v\ge V_R$ we obtain that 
\[
\begin{split}
\frac{1}{|B_r|}\int_{B_r \setminus E} \psi(v)^{-\ell-1} \eta^{s-q} \phi(x,v)\, dx 
&\lesssim 
\frac{1}{|B_r|}\int_{B_r \setminus E} \psi(V_R)^{-\ell-1} \eta^{s-q}  \phi(x, V_R)\, dx\\
&\lesssim 
\fint_{B_r} \psi(V_R)^{-\ell} \eta^{s-q}  \, dx
\lesssim  \psi(V_R)^{-\ell},
\end{split}
\]
again by $\phi(x, V_R) \approx \psi(V_R)$ (from \aones{\omega}). Furthermore, by \eqref{eq:vBound} 
we obtain
\[
\begin{split}
\fint_{B_{r/2}} v \, dx 
\lesssim  
\fint_{B_{R}} \frac{u+R}{R} \, dx
\lesssim  
(\omega^-_R)^{-1}\Big(\frac{d}{|B_R|} \Big) = V_R.
\end{split}
\]
This and Jensen's inequality for the convex function $t\mapsto \psi(t)^{-\ell}$ yield
\[
\psi(V_R)^{-\ell}
\lesssim 
\psi\bigg( \fint_{B_{r/2}} v\, dx\bigg)^{-\ell}
\lesssim 
\fint_{B_{r/2}} \psi(v)^{-\ell} \, dx
\lesssim 
\fint_{B_r} \psi(v)^{-\ell} \eta^{s-q} \, dx.
\]
Combining the estimates in $E$ and $B_r\setminus E$ and the previous inequality, we have established that 
\begin{equation}\label{eq:splitEstimate}
\Big(\fint_{B_r} \psi(v)^{-\ell n'} \eta^{sn'} \, dx  \Big)^{\frac{1}{n'}}  
\leq 
\frac {C(s+\ell)}{(1-\sigma)^{q}}
\fint_{B_r} \psi(v)^{-\ell } \eta^{s-q} \, dx  .
\end{equation}

Let us choose $s:=\ell-(n-1)q$, and suppose that $\ell$ is so large that $s \ge q$, 
$(\ell+1)p\ge q$ and $\ell\ge nq$. 
Raising both sides of the previous inequality to the power $-\frac1\ell$ gives 
\begin{align*}
\underbrace{\Big(\fint_{B_r} \psi(v)^{-\ell n'} \eta^{\ell n' - nq} \, dx  \Big)^{-\frac{1}{n' \ell}}}
_{=:\Psi(n'\ell)}
\ge 
\Big(\frac {C\ell}{(1-\sigma)^{q}} \Big)^{-\frac1\ell}
\underbrace{\Big(\fint_{B_r} \psi(v)^{-\ell} \eta^{\ell - nq} \, dx \Big)^{-\frac1\ell}}
_{=\Psi(\ell)}.
%=
%[\tfrac C\ell (1-\sigma)^q]^{\frac 1\ell} \Psi(\ell).
\end{align*}
Let us then set $\ell=n_k:=(n')^k$. For $k\ge k_0$ (so that the required 
lower bounds on $\ell$ hold)  
we use the standard iteration technique. By induction, we obtain that 
\[
\Psi(n_{K}) 
\ge
\exp\Big( -\sum_{k=k_0}^{K-1} \frac{\ln n_k}{n_k}\Big)(C(1-\sigma))^{-\sum_{k=k_0}^{K-1}\frac{q}{n_k}} 
\Psi(n_{k_0}). 
\]
Denote $\gamma:=n_{k_0}$. 
Since $\sum_{k=k_0}^\infty \frac{q}{n_k} = \frac{nq}\gamma =:\beta<\infty$ as a geometric series and 
$\sum_{k=k_0}^\infty \frac{\ln n_k}{n_k} < \infty$ by comparison with a geometric series, we get
\begin{align*}
\essinf_{B_{\sigma r}} \psi(v) 
&\ge 
\essinf_{B_{r}} \frac{\psi(v)}\eta 
= \lim_{K\to \infty} \Psi(n_{K}) \\ 
&\gtrsim
(1-\sigma)^{-\beta} \Psi(\gamma) 
= 
(1-\sigma)^{-\beta} \bigg(\fint_{B_r} \psi(v)^{-\gamma} \eta^{\gamma - nq}\, dx \bigg)^{-\frac1\gamma} \\
&\ge 
(1-\sigma)^{-\beta} \bigg(\fint_{B_{r}} \psi(v)^{-\gamma} \, dx \bigg)^{-\frac1\gamma}
= 
 (1-\sigma)^{-\beta} \psi\bigg(\xi^{-1}\Big(\fint_{B_{r}} \xi(v)\, dx \Big)\bigg)\\
&\gtrsim  \psi\bigg( (1-\sigma)^{-\frac\beta p} \xi^{-1}\Big(\fint_{B_{r}} \xi(v)\, dx \Big)\bigg),
\end{align*}
where $\xi(t):=\psi(t)^{-\gamma}$, and \ainc{p} of $\psi$ was used in the last inequality. 
As $\psi$ is strictly increasing, this implies
\[
\essinf_{B_{\sigma r}} v \gtrsim (1-\sigma)^{-\frac\beta p} \xi^{-1}\Big(\fint_{B_{r}} \xi(v)\, dx \Big).
\]
Since $t\mapsto \xi(t^{-1/(\gamma q)})$ satisfies \adec{1}, it is equivalent to 
a concave function \cite[Lemma~2.2.1]{HarH19}, and so by Jensen's inequality 
\begin{align*}
\essinf_{B_{\sigma r}} v
\gtrsim (1-\sigma)^{-\frac\beta p}
\xi^{-1}\Big(\fint_{B_{r}} \xi(v)\, dx \Big)
&\gtrsim(1-\sigma)^{-\frac\beta p}
\Big( \fint_{B_{r}} v^{-\gamma q} \, dx  \Big)^{- \frac{1}{\gamma q}}.
\end{align*}
Then we recall that  $v= \frac{u+R}{r}$ and multiply both sides by $r$: 
\begin{align}\label{equ:suuret-arvot}
\essinf_{B_{\sigma r}} u + R 
&\gtrsim  
(1-\sigma)^{-\frac\beta p} \Big( \fint_{B_r} (u + R)^{-\gamma q} \, dx  \Big)^{- \frac{1}{\gamma q}}, 
\end{align}
where $\frac{R}{2}\le r \le R$ and the constant depends only on $p$, $q$, $L_p$, $L_q$, $n$, $\ell$ and $d$. 
This is the claim for $\ell=\gamma q$. 
For exponents larger than $\gamma q$ the claim follows by Hölder's inequality.
We have thus established the claim for large exponents. 

Finally we show the claim for small exponents. We use the $+R$ to simplify the proof, but this 
is not essential here. So let $\ell\in (0,\gamma q)$. We observe that 
\begin{align*}
\essinf_{B_{\sigma r}} u + R
&\ge
C (1-\sigma)^{-\frac\beta p} \Big( \fint_{B_r} (u + R)^{-\gamma q} \, dx  \Big)^{- \frac{1}{\gamma q}} \\
& \ge 
C (1-\sigma)^{-\frac\beta p} \Big( \fint_{B_r} (u + R)^{-\ell} \, dx  \Big)^{- \frac{1}{\gamma q}}
(\essinf_{B_r} u+R)^{\frac{\gamma q-\ell}{\gamma q}}.
\end{align*}
We denote $Q:=\frac{\gamma q-\ell}{\gamma q}$, $\Lambda(r):=\essinf_{B_r} u+R$ and 
$A:= ( \fint_{B_R} (u + R)^{-\ell} \, dx )^{-1/\ell}$. Thus, for $r\in [\frac R{2}, R]$, 
\[
\Lambda(\sigma r) \ge C (1-\sigma)^{-\frac\beta p} A^{\frac \ell{\gamma q}} \Lambda(r)^Q. 
\]
Then we set $r:=(1-2^{-(k+1)})R$ and $\sigma r := (1-2^{-k})R$ so that $1-\sigma \approx 2^{-k}$. 
With $\Lambda_k:=\Lambda((1-2^{-k})R)$ and iteration we obtain 
\[
\begin{split}
\Lambda_1 
&\ge 
C 2^{\frac\beta p} A^{\frac \ell{\gamma q}} \Lambda_2^{Q}
\ge 
2^{\frac\beta p(1+2Q)} (CA^{\frac \ell{\gamma q}})^{1+Q} \Lambda_3^{Q^2}
\ge \ldots \\
&\ge 
2^{\frac\beta p \sum_{k=1}^\infty kQ^{k-1}} (CA^{\frac \ell{\gamma q}})^{\sum_{k=0}^\infty Q^k} \liminf_{k \to \infty} \Lambda_k^{Q^k}.
\end{split}
\]
Since $\Lambda_k\ge R$ and $Q\in (0,1)$, $ \liminf_{k \to \infty} \Lambda_k^{Q^k}\ge 1$. Furthermore, 
$\sum_{k=1}^\infty kQ^{k-1} < \infty$ and $\sum_{k=0}^\infty Q^k = \frac 1{1-Q} = \frac{\gamma q}\ell$. 
Hence $\Lambda_1 \gtrsim A$, which is the claim for $\ell$. 
\end{proof}

The previous proof only works for negative $\ell$. In fact, the largeness of $-\ell$ and the condition
\aones{\omega}  were only used in the paragraph with \eqref{eq:splitEstimate}. 
With some modifications, we can iterate also for some positive exponents. 
We use the limiting exponent $\ell(p)$ defined in the introduction. 
See Proposition~\ref{prop:betaBound} for a simpler, sufficient condition for \eqref{eq:betaBound}. 

\begin{prop}\label{prop:betaCondition}
Suppose $\phi\in \Phiw(\Omega)$ satisfies \azero{}, \ainc{p} and \adec{q}, $p,q>1$. 
Let $u$ be a nonnegative supersolution to \eqref{eq:A-eq} in $B_R$. 
Assume that 
\begin{equation}\label{eq:betaBound}
\fint_{B_r} \Big(\frac{\phi(x,v)}{\phi_{B_r}^-(v)}\Big)^{\beta}\, dx \le d,
\end{equation}
for some $\beta>\max\{\frac np,1\}$ and all $B_r\subset B_R$, where $v:=\frac{u+r}r$. 
For any $\ell_0>0$ and $\ell< \ell(p)$, 
there exists a constant $C = C(p,q, L_p,L_q,n, \ell_0, \ell, d)>0$ such that
\begin{align*}
\Big(\fint_{B_{2R}} (u+R)^{\ell_0} \, dx  \Big)^{\frac{1}{\ell_0}}
\ge
C \psi^{-1}\bigg(\Big(\fint_{B_R} \psi(v)^{\frac \ell p} \, dx  \Big)^{\frac{p}{\ell}} \bigg)
\ge
C\Big(\fint_{B_R} (u+R)^{\ell} \, dx  \Big)^{\frac{1}{\ell}}.
\end{align*}
\end{prop}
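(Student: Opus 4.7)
The plan is to mirror the Moser iteration of Theorem~\ref{thm:essinf} for positive exponents of $v$, with the single change being that \aones{\omega} (used in the paragraph containing~\eqref{eq:splitEstimate}) is replaced by H\"older's inequality coupled with assumption~\eqref{eq:betaBound}. Thus I would first apply the Caccioppoli inequality (Lemma~\ref{lem:caccioppoli}) with $\psi=\psi_r$ and then the Sobolev--Poincar\'e inequality to the compactly supported function $\psi(v)^{-\ell}\eta^s$ exactly as in the opening of that proof, reaching the same intermediate estimate
\[
\Bigl(\fint_{B_r}\psi(v)^{-\ell n'}\eta^{sn'}\,dx\Bigr)^{1/n'} \lesssim \tfrac{C(s+\ell)}{(1-\sigma)^q}\fint_{B_r}\psi(v)^{-\ell-1}\phi(x,v)\eta^{s-q}\,dx.
\]

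Next, in place of the sub-/super-level splitting, I would write $\psi(v)^{-\ell-1}\phi(x,v) = \psi(v)^{-\ell}\cdot(\phi(x,v)/\psi(v))$ and apply H\"older's inequality with exponent $\beta$ and conjugate $\beta'=\beta/(\beta-1)$. Since $\psi_r \approx \phi_{B_r}^-$, the $\beta$-mean of $\phi(x,v)/\psi(v)$ is controlled by a constant depending only on $d$ via~\eqref{eq:betaBound}. Combining yields the one-step reverse-H\"older estimate
\[
\Bigl(\fint_{B_r}\psi(v)^{-\ell n'}\eta^{sn'}\,dx\Bigr)^{1/n'} \lesssim C_{s,\ell,d,\sigma}\Bigl(\fint_{B_r}\psi(v)^{-\ell\beta'}\eta^{(s-q)\beta'}\,dx\Bigr)^{1/\beta'}.
\]
I would then iterate this on shrinking balls $r_k=(1-2^{-k})R$ as in the concluding iteration of Theorem~\ref{thm:essinf}, choosing the exponents $\ell_k$ so that $\ell_k\beta'=\ell_{k-1}n'$, and finally invoke the Jensen-inversion step (applied to $t\mapsto \xi(t^{-1/(\gamma q)})$, as in the display following~\eqref{eq:splitEstimate}) to convert the $W^{1,1}$-Sobolev gain into the stronger effective gain $p^*/p$. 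The hypothesis $\beta>\max\{n/p,1\}$ is exactly what makes the resulting iteration ratio $(p^*/p)/\beta'$ exceed $1$, so that the limiting exponent reaches but does not exceed $\ell(p)$. The standard shrinking-ball iteration in the last paragraph of the proof of Theorem~\ref{thm:essinf} then passes from a fixed moderate exponent down to arbitrarily small $\ell_0>0$ on the enlarged ball $B_{2R}$, producing the first (middle) inequality of the conclusion.

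The second inequality then follows from \ainc{p} of $\psi$: since $v\ge 1$ one has $\psi(v)\gtrsim v^p$, hence $\psi(v)^{\ell/p}\gtrsim v^\ell$, and inverting via $\psi^{-1}$ and recalling $(u+R)=Rv$ gives the claimed lower bound via standard $\Phi$-function manipulations. The main obstacle will be the exponent bookkeeping in the iteration: verifying that each $\ell_k$ stays in the range $\ell_k>1/p$ required by Caccioppoli while the Jensen-inversion step is still applicable, that the constants $C_{s,\ell,d,\sigma}$ telescope to a finite limit along the geometric progression of shrinking balls, and --- most delicately --- that the effective limit exponent of the iteration is indeed $\ell(p)$ rather than some strictly smaller number, so that the full range $\ell<\ell(p)$ is covered.
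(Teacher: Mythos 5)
Your plan breaks down at its central step: the claimed reverse H\"older gain does not exist under the stated hypothesis. With the $W^{1,1}$-Sobolev--Poincar\'e inequality the left-hand exponent of $\psi(v)$ improves only by the factor $n'=\tfrac{n}{n-1}$, while the H\"older step with exponent $\beta$ applied to $\phi(x,v)/\psi(v)$ worsens the right-hand exponent by the factor $\beta'$. The iteration therefore gains only when $\beta'<n'$, i.e.\ $\beta>n$, whereas the proposition assumes merely $\beta>\max\{\tfrac np,1\}$ (take $n=3$, $p=2$, $\beta=2$: then $n'/\beta'=3/4<1$ and your scheme with $\ell_k\beta'=\ell_{k-1}n'$ runs downhill). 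The ``Jensen-inversion'' step cannot rescue this: the passage between means of $\psi(v)$ and means of $v^p$ via \ainc{p} and convexity acts identically on both sides of the inequality and does not change the ratio of exponents, so there is no mechanism converting the $W^{1,1}$ gain $n'$ into an effective gain $p^*/p$. Moreover, even if $\beta>n$ were assumed, the Caccioppoli restriction (exponent of $\psi(v)$ strictly below $\tfrac1{p'}$ in the $W^{1,1}$ setting) caps the final exponent on $u+R$ at $\tfrac{n(p-1)}{n-1}$, which is strictly smaller than $\ell(p)=\tfrac{n(p-1)}{n-p}$ for $p>1$; so the sharp range $\ell<\ell(p)$ is out of reach by this route.

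The idea you are missing is the one the paper uses: replace the $W^{1,1}$-Sobolev inequality by the $W^{1,\gamma}$-Sobolev inequality and let $\gamma\nearrow\min\{p,n\}$. Concretely, one raises the integrand to the power $\gamma$ before applying Sobolev, handles the gradient term by Young's inequality for the auxiliary function $\xi(t):=\psi(t^{1/\gamma})$ (so that $\xi^*\big(\psi(t)/t^\gamma\big)\approx\psi(t)$), and applies the Caccioppoli inequality in the regime $\ell\gamma+1>\tfrac1p$, which allows $\ell$ to be negative, i.e.\ positive powers of $\psi(v)$. Then the Sobolev gain is $\gamma^*/\gamma$, and the H\"older loss $\lambda$ with $\lambda'\in(\tfrac n\gamma,\beta]$ from \eqref{eq:betaBound} can be absorbed precisely because $\gamma\lambda<\gamma^*$ when $\lambda'>\tfrac n\gamma$; the hypothesis $\beta>\tfrac np$ is exactly what makes such a choice of $\lambda'$ possible once $\gamma$ is close enough to $p$. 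Finally, the admissible exponents reach $\tfrac{\gamma^*}{\gamma p'}$ on $\psi(v)$, hence $p\cdot\tfrac{\gamma^*}{\gamma p'}\to\ell(p)$ on $u+R$ as $\gamma\nearrow\min\{p,n\}$, which is how the full range $\ell<\ell(p)$ is obtained. Your concluding step (passing to the $\ell_0$-mean on $B_{2R}$ and the lower bound via \ainc{p} of $\psi$) is fine, but it rests on a one-step inequality that your argument does not deliver.
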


\begin{proof}
We proceed as in the previous proof, but use the $W^{1,\gamma}$-Sobolev inequality 
instead of the $W^{1,1}$-version. Here we will eventually take $\gamma \nearrow \min\{p,n\}$. 
In place of \eqref{eq:after-sob} we obtain 
\[
\begin{split}
\Big(\fint_{B_r} \psi(v)^{-\ell \gamma^*} \eta^{s \gamma^*} \, dx  \Big)^{\frac{\gamma}{\gamma^*}}  
& \lesssim
\fint_{B_r} \big(\psi(v)^{-\ell-1} \eta^{s-1} 
\big[\tfrac{\psi(v)}{1-\sigma}  + \eta  \psi'(v) |\nabla \tilde u| \big]\big)^\gamma\, dx \\
%& \lesssim
%\fint_{B_r} \big(\psi(v)^{-\ell-1} \eta^{s-1} 
%\big[\tfrac{\psi(v)}{1-\sigma}  + \eta  \tfrac{\psi(v)}{v} |\nabla \tilde u| \big]\big)^\gamma\, dx \\
&\lesssim \fint_{B_r} \psi(v)^{-\ell \gamma}  \tfrac{\eta^{\gamma(s-1)}}{(1-\sigma)^\gamma}  + \eta^{s \gamma}  \psi(v)^{-\ell \gamma -1} \tfrac{\psi(v)}{v^\gamma} |\nabla \tilde u|^\gamma\, dx 
\end{split}
\]
where we already divided by $r^{n-1}$, and used Remark~\ref{rem:incdec} as well as
$(a+b)^\gamma \approx a^\gamma+b^\gamma$. 

We estimate $\frac{\psi(v)}{v^\gamma} |\nabla \tilde u|^\gamma$ with Young's inequality. 
Define $\xi(t):=\psi(t^{1/\gamma})$. Then $\xi^{-1}(t) = \psi^{-1}(t)^\gamma$ and 
\[
(\xi^*)^{-1}(t) \approx \frac{t}{\xi^{-1}(t)} = \frac{t}{\psi^{-1}(t)^\gamma}
\]
since $\xi^{-1}(t)(\xi^*)^{-1}(t)\approx t$ by \cite[Theorem~2.4.8]{HarH19}. Hence 
\[
\xi^*\Big(\frac{\psi(t)}{t^\gamma} \Big) \approx \psi(t)
\]
and so by Young's inequality
\[
\frac{\psi(v)}{v^\gamma} |\nabla \tilde u|^\gamma 
\le
\xi(|\nabla \tilde u|^\gamma) + \xi^*\Big( \frac{\psi(v)}{v^\gamma} \Big) 
\approx 
\psi(|\nabla \tilde u|) + \psi(v)
\lesssim
\phi(x,|\nabla \tilde u|) + \psi(v).
\]

%By Remark~\ref{rem:incdec},  $[\psi(v)^{-\ell-1} \psi'(v) ]^\gamma \lesssim \psi(v)^{-\ell \gamma -1} \frac{\psi(v)}{v^\gamma}$.
Then we estimate with the Caccioppoli inequality (Lemma~\ref{lem:caccioppoli})
\[
\fint_{B_r} \psi(v)^{-\ell \gamma -1} \eta^{s \gamma} \phi(x,|\nabla \tilde u|)\, dx 
\lesssim 
\tfrac{1}{(1-\sigma)^q} \fint_{B_r} \psi(v)^{-\ell \gamma-1} \eta^{s\gamma -q} \phi(x,v) \, dx 
\]
provided $\ell \gamma +1> \frac 1p$, which means that $\ell$ can also be negative. 
Thus we have
\[
\begin{split}
\Big(\fint_{B_r} \psi(v)^{-\ell \gamma^*} \eta^{s \gamma^*} \, dx  \Big)^{\frac{\gamma}{\gamma^*}}  
&\lesssim 
\fint_{B_r}  \psi(v)^{-\ell \gamma}  \tfrac{\eta^{\gamma(s-1)}}{(1-\sigma)^\gamma} 
+ \psi(v)^{-\ell \gamma-1} \tfrac{\eta^{s\gamma-q}}{(1-\sigma)^q} \phi(x,v) 
+ \eta^{s \gamma}  \psi(v)^{-\ell \gamma} \, dx \\
%&\le  \tfrac{1}{(1-\sigma)^\gamma} \fint_{B_r}  \psi(v)^{-\ell \gamma-1} \eta^{s\gamma-q} \phi(x,v)  \, dx\\
&\le  \tfrac{1}{(1-\sigma)^q} \fint_{B_r}  \psi(v)^{-\ell \gamma-1} \eta^{s-q} \phi(x,v)  \, dx,
\end{split}
\]
where $\psi(v) \lesssim \phi(x, v)$,  $\eta^{\gamma (s-1)} \le \eta^{s-q}$ and $\eta^{\gamma s} \le \eta^{s-q}$ were used. 
In contrast to the previous proof, we next use H\"older's inequality 
\[
\fint_{B_r} \psi(v)^{-\ell\gamma-1} \eta^{s-q}  \phi(x,v)\, dx
\le
\bigg(\fint_{B_r} \psi(v)^{-\ell \gamma \lambda} \eta^{(s-q)\lambda}  \, dx \bigg)^\frac1\lambda
\bigg(\fint_{B_r} \Big(\frac{\phi(x,v)}{\psi(v)}\Big)^{\lambda'}\, dx \bigg)^\frac1{\lambda'}.
\]
Since the second factor on the right-hand side is bounded by \eqref{eq:betaBound} when 
$\lambda' \in (\frac{n}{\gamma}, \beta]$, we obtain that
\[
\Big(\fint_{B_r} \psi(v)^{-\ell \gamma^*} \eta^{s\gamma^*} \, dx  \Big)^{\frac{1}{\gamma^*}}  
\lesssim
\bigg(\fint_{B_r} \psi(v)^{-\ell \gamma \lambda} \eta^{(s-q)\lambda} \, dx\bigg)^\frac1{\gamma\lambda}.
\]
Since $\lambda < (\frac{n}{\gamma})' = \frac n{n-\gamma}$, we conclude that 
$\gamma \lambda < \gamma^*$. 
Therefore, we have obtained a reverse H\"older 
type inequality, which can be iterated (as in the previous proof) to show that 
\[
\bigg(\fint_{B_{2r}} v^{\ell_0} \, dx\bigg)^\frac1{\ell_0}
\gtrsim 
\psi^{-1}\Bigg(\Big(\fint_{B_r} \psi(v)^{\gamma_2} \, dx  \Big)^{\frac{1}{\gamma_2}} \Bigg)
\ge
\bigg(\fint_{B_{r}} v^{p\gamma_2} \, dx\bigg)^\frac1{p\gamma_2}
\]
for $\gamma_2\le -\ell \gamma^*$ and any $\ell_0>0$; the last step is just Jensen's inequality. 
Since we can choose any value of $\ell$ with $-\ell < \frac1{\gamma p'}$, we have the inequality 
for any $\gamma_2 < \frac{\gamma^*}{\gamma p'} = \frac n{n-\gamma} \frac{p-1}p$. 
Thus letting $\gamma \nearrow \min\{p,n\}$ we can obtain the weak Harnack inequality for exponent up to, but not including, $\ell(p)$. 
\end{proof}

%%%%%%%%%%%%%%%%%%%%%%%%%%%%%%%%%%%%%%%%%%%%%
%%%%%%%%%%%%%%%%%%%%%%%%%%%%%%%%%%%%%%%%%%%%%
%%%%%%%%%%%%%%%%%%%%%%%%%%%%%%%%%%%%%%%%%%%%%

%\section{The weak Harnack inequality}

We are now ready for the proof of the main result, the weak Harnack inequality. 
Note that here we need to add the \ainc{} assumption for $\phi$ compared to Theorem~\ref{thm:essinf}. 

\begin{thm}\label{thm:weakHarnackGeneral}
Suppose $\phi$ satisfies \azero{}, \ainc{p} and \adec{q}, $1<p\le q<\infty$. 
Let $u$ be a non-negative solution to \eqref{eq:A-eq} on $B_{2R}$.
Assume that there exists $\omega \in \Phiw(\Omega)$ which satisfies \azero{}  and \adec{} such that 
one of the following holds:
\begin{enumerate}
\item 
$\phi$ satisfies \aones{\omega} and \eqref{eq:vBound} and \eqref{eq:betaBound}, with $\beta=1$, hold.
%\[
%\omega_{B_r}^-\bigg(\fint_{B_r} \frac{u + r}{r}\, dx\bigg) \le \frac d{|B_r|}
%\quad\text{and}\quad
%\fint_{B_{r}}  \frac{\phi\big(x, \frac{u+r}{r}\big)}{\phi_{B_r}^-\big(\frac{u+r}{r}\big)} \, dx  \le d
%\]
%for all $B_r \subset B_R$.
\item 
$\phi$ satisfies \aones{\omega} and $\| u\|_{W^{1,\omega}(B_{R})} \le d$.
\end{enumerate}  
Then there exist $\ell_0=\ell_0(p,q,L_p,L_q,\beta,d,n)>0$ and  $C = C(p, q, L_p,L_q,\beta, d, n)>0$ such that
\begin{align*}
\bigg (\fint_{B_{2R}} (u +R)^{\ell_0} \, dx \bigg)^{\frac{1}{\ell_0}}  \le C (\essinf_{B_R} u + R).
\end{align*}
If \eqref{eq:betaBound} holds with $\beta>\max\{\frac np,1\}$, then we can choose any $\ell_0<\ell(p)$. 
\end{thm}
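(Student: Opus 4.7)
The plan is to complete the Moser iteration by bridging positive and negative exponents via a $BMO$ bound on $\log(u+R)$. Theorem~\ref{thm:essinf} already supplies the lower bound $\essinf_{B_R} u + R \gtrsim \big(\fint_{B_{2R}} (u+R)^{-\ell}\big)^{-1/\ell}$ for every $\ell>0$ under the hypothesis \eqref{eq:vBound}, so the task reduces to matching a positive moment against a negative one of the same order; there will then be one further upgrade to obtain any $\ell_0 < \ell(p)$ when the stronger integrability is available.

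In case (2) the strategy is first to reduce to case (1). The $W^{1,\omega}$-bound combined with the Orlicz--Sobolev--Poincar\'e inequality yields \eqref{eq:vBound}, and together with \aones{\omega} it forces \eqref{eq:betaBound} with $\beta=1$ by splitting integration at the threshold $V_R := (\omega_{B_r}^-)^{-1}(1/|B_r|)$: on $\{v \leq V_R\}$ the ratio $\phi(x,v)/\phi_{B_r}^-(v)$ is pointwise controlled by \aones{\omega} together with \adec{} of $\phi$, while on $\{v > V_R\}$ the measure of the exceptional set is small because $\int_{B_r}\omega(x,v)\,dx \lesssim d$.

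The central new ingredient is the $BMO$ bound for $\log(u+R)$. I would obtain this by testing equation \eqref{eq:A-eq} with a logarithmic-type test function $h := \eta^s / \psi_r(v)$, where $\psi_r$ is the auxiliary function of Definition~\ref{def:psi} and $v=(u+R)/R$. Expanding $\nabla h$, invoking the growth conditions \eqref{eq:growth} together with the identity $t\psi_r'(t)\approx \psi_r(t)$ from Remark~\ref{rem:incdec} (for which \ainc{p} with $p>1$ is indispensable, and which explains why that hypothesis must be strengthened relative to Theorem~\ref{thm:essinf}), then applying Young's inequality and transferring $\phi(x,\cdot)$ to $\phi_{B_r}^-$ using \aones{\omega} and \eqref{eq:betaBound}, should yield a Caccioppoli-type estimate of the form
\[
\fint_{B_r} \phi\big(x,\, R|\nabla \log(u+R)|\big)\, dx \lesssim C.
\]
Combined with the $W^{1,1}$-Poincar\'e inequality and \ainc{p} (to pass from a $\phi$-modular to an $L^1$-bound on the gradient of the logarithm), this upgrades to $\log(u+R) \in BMO(B_R)$ with constant depending only on the data.

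With the $BMO$ bound in hand, the John--Nirenberg lemma supplies some $\ell_0>0$ with
\[
\Big(\fint_{B_R}(u+R)^{\ell_0}\Big)^{1/\ell_0} \lesssim \Big(\fint_{B_R}(u+R)^{-\ell_0}\Big)^{-1/\ell_0},
\]
and chaining this with Theorem~\ref{thm:essinf} at the same exponent (absorbing the passage from $B_R$ to $B_{2R}$ by standard ball chaining) closes the weak Harnack inequality for this particular $\ell_0$. For the improved regime $\beta > \max\{n/p,1\}$, one then invokes Proposition~\ref{prop:betaCondition} to push $\ell_0$ arbitrarily close to $\ell(p)$ by a reverse-H\"older upgrade applied to positive moments. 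The hardest step is expected to be the $BMO$ estimate: because $\phi$ depends on $x$ the logarithmic Caccioppoli inequality does not close with $\phi$ itself, and $\psi_r$ only approximates $\phi_{B_r}^-$ rather than $\phi(x,\cdot)$; controlling the discrepancy between $\phi(x,v)$ and $\psi_r(v)$ uniformly across $B_r$ is exactly what forces the integral condition \eqref{eq:betaBound}, and getting the constants to depend only on $d$ (and not on the geometry of the particular ball) is where the quantitative strength of \aones{\omega} is consumed.
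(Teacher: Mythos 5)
Your overall architecture matches the paper's proof: a $BMO$ bound for $\log(u+R)$ via a logarithmic Caccioppoli estimate built on $\psi_r\approx\phi^-_{B_r}$, the John--Nirenberg lemma to jump over zero, Theorem~\ref{thm:essinf} (with \eqref{eq:vBound} supplied by Proposition~\ref{prop:vBound} in case (2)) for the negative moments, and Proposition~\ref{prop:betaCondition} for the upgrade to any $\ell_0<\ell(p)$ when $\beta>\max\{\frac np,1\}$. Up to the choice of test function ($\eta^s/\psi_r(v)$ versus the paper's route through the Poincar\'e inequality for $\log(u+R)$ and Lemma~\ref{lem:caccioppoli} with $\ell=1$, $s=q$), case (1) is essentially the paper's argument.

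The genuine gap is your treatment of case (2). You claim that the $W^{1,\omega}$-bound together with \aones{\omega} ``forces'' \eqref{eq:betaBound} with $\beta=1$, so that (2) reduces to (1). This is not true under the stated hypotheses: in case (2) the only structural assumptions on $\omega$ are \azero{} and \adec{}, whereas deriving \eqref{eq:betaBound} from a $W^{1,\omega}$-bound requires a quantitative lower growth condition on $\omega$, namely \ainc{s} with $s^*\ge\beta(q-p)$ as in Proposition~\ref{prop:betaBound}; for $\omega=\phi$ this would impose $p^*\ge q-p$, which is not assumed in the theorem. Your heuristic for the bad set is also insufficient: on $\{v>V_R\}$ the ratio $\phi(x,v)/\phi^-_{B_r}(v)$ can be as large as $(v/V_R)^{q-p}$, and smallness of the \emph{measure} of that set (via Chebyshev from $\int\omega(x,v)\,dx\lesssim d$) does not control the \emph{integral} of this unbounded ratio; one needs integrability of $v^{q-p}$ at the appropriate scale, which is exactly the missing \ainc{s} hypothesis. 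The paper instead proves the $BMO$ bound in case (2) directly and never establishes \eqref{eq:betaBound}: on $B_{2r}\setminus E$, where $v\le\omega^{-1}(x,r^{-n})$, condition \aones{\omega} gives $\phi^+_{B_{2r}}(v)\approx\phi^-_{B_{2r}}(v)$, so the Caccioppoli argument can be run with an auxiliary function comparable to $\phi^+_{B_{2r}}$ and the troublesome ratio never appears; on $E$ one estimates $|\nabla u|/v$ by Young's inequality for $\xi:=r^n\omega$, using $\|\nabla u\|_{L^\omega}\le d$ for the $\omega$-term and the pointwise bound $\omega^*(x,\epsilon/(r^nv))\le r^{-n}$ (from $\omega^{-1}(\omega^*)^{-1}(t)\approx t$) for the conjugate term. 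Without an argument of this kind, your proof covers only a strictly smaller parameter range than the theorem asserts in case (2).
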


\begin{proof}
Let $0< r \le \frac12 R$ and denote $v := \frac{u +r}r$ so that $v \approx \frac{u+2r}{2r}$. 
By the $W^{1,1}$-Poincar\'e inequality we get
\begin{align*}
\fint_{B_{r}} \Big|\log(u+R) - \fint_{B_r}\log(u+R)\, dy\Big| \, dx 
\leq 
r \fint_{B_r} |\nabla (\log(u+R))| \, dx 
\approx \fint_{B_r} \frac{|\nabla u|}{v} \, dx.
\end{align*}
Considering the cases $ \frac{|\nabla u|}{v}  \le 1$ and $\frac{|\nabla u|}{v} >1$, 
we conclude that 
\[
\frac{|\nabla u|}{v} 
\lesssim 
1 + \frac{\phi_{B_{2 r}}^-(|\nabla u|)}{\phi_{B_{2 r}}^-(v)}
%\lesssim
%1 + \frac{\phi(x,|\nabla u|)}{\phi_{B_{2 r}}^-(v)}
\lesssim
1 + \frac{\phi(x,|\nabla u|)}{\psi_{2 r}(v)},
\]
where \ainc{1} was used in the case $\frac{|\nabla u|}{v} >1$, and $\psi_{2 r} \approx \phi^-_{B_{2 r}}$ in the last step. 
Note that $\psi_{2 r}$ satisfies \inc{p}.
It follows from the Caccioppoli inequality (Lemma~\ref{lem:caccioppoli}) 
with $\ell=1> \frac1{p}$ and $s=q$ that 
\[
\begin{split}
\int_{B_r} \frac{\phi(x,|\nabla u|)}{\psi_{2 r}(v)} \, dx 
&\lesssim
\int_{B_{2r}} \frac{\phi(x,|\nabla u|)}{\psi_{2 r}\big(\frac{u+2r}{2r}\big)} \eta^q \, dx 
\lesssim 
\int_{B_{2r}}  \frac{\phi\big(x, \frac{u+2r}{2r}\big)}{\psi_{2 r}\big(\frac{u+2r}{2r}\big)} \, dx 
\approx
\int_{B_{2r}}  \frac{\phi\big(x, \frac{u+2r}{2r}\big)}{\phi^-_{2 r}\big(\frac{u+2r}{2r}\big)} \, dx.
\end{split}
\]
Then we divide by $|B_{r}|$, note that $|B_{r}|\approx |B_{2r}|$, and combine with the previous inequalities to obtain that 
\[
\fint_{B_r} \frac{|\nabla u|}{v} \, dx 
\lesssim 
1+
\fint_{B_r} \frac{\phi(x,|\nabla u|)}{\psi_{2 r}(v)} \, dx 
\lesssim 
1+
\fint_{B_{2r}}  \frac{\phi\big(x, \frac{u+2r}{2r}\big)}{\phi^-_{2 r}\big(\frac{u+2r}{2r}\big)} \, dx 
\le 1+d,
\]
where \eqref{eq:betaBound} have been used in the last inequality.
Thus we have established $\log(u+R) \in BMO$ under assumption  (1). 

Next we consider assumption (2) with $\|\nabla u\|_{L^\omega(B_{2R})} \le d$.
Define 
\begin{align*}
E &:= \{x \in B_{2r} : \omega^{-1}(x,\tfrac1{r^n}) < v(x)\}.
%\quad\text{and}\quad
%E_2 &:= \{x \in B_{2r} : 1\le v \leq \omega^{-1}(x,\tfrac1{r^n}) \}.
\end{align*}
In $B_r\setminus E$, we have $\phi^+_{B_{2r}}(v)\approx \phi^-_{B_{2r}}(v)$ by the \aones{\omega} 
condition of $\phi$ and $v \ge 1$. Then we use the Caccioppoli inequality as before, except instead of 
$\psi_r$ we use a corresponding function with $\psi_{2r} \approx \phi^+_{B_{2r}}$ and so 
do not need \eqref{eq:betaBound}. 

In $E$, we use Young's inequality for the $\Phi$-function $\xi:=r^n \omega$: 
\[
\frac{|\nabla u|}{v} 
\le 
\xi(x,\tfrac1\epsilon|\nabla u|) + \xi^*(x, \tfrac{\epsilon}{v})
=
r^n \omega(x,\tfrac1\epsilon|\nabla u|) + r^n\omega^*(x, \tfrac{\epsilon}{r^{n}v}), 
\]
since $\xi^* (t) = r^n \omega^*(t/r^n)$  by \cite[Lemma 2.4.3]{HarH19}.
For the first term, $\fint_{B_r} r^n \omega(x,\tfrac1\epsilon|\nabla u|)\, dx \le c$ by the assumption 
$\|\nabla u\|_{L^\omega(B_{2R})} \le d$ and \adec{} of $\omega$. 
In $E$, $v > \omega^{-1}(x,\tfrac1{r^n}) \approx \frac{1}{r^n (\omega^*)^{-1}(x,\frac1{r^n})}$, 
since $\omega^{-1}(t) (\omega^*)^{-1}(t) \approx t$ \cite[Theorem 2.4.8]{HarH19}. 
So $\frac1{r^{n}v} \lesssim (\omega^*)^{-1}(x,\frac1{r^n})$, and hence 
$\omega^*\big(x, \frac\epsilon{r^{n}v}\big) \le \frac1{r^n}$ for appropriate $\epsilon>0$. 
Therefore
\begin{align*}
\frac{1}{|B_{2r}|}\int_{E} r^n\omega^*(x, \tfrac{\epsilon}{r^{n}v}) \, dx 
\le
\frac{1}{|B_{2r}|} \int_{E} 1 \, dx \leq 1.
\end{align*}
We have bounded all terms, so $\log(u+R) \in BMO$ also under assumptions (2).

Since $\log(u+R)\in BMO$, the John--Nirenberg lemma says that there exist positive 
constants $\ell_0$ and $C$ such that
\begin{align*}
\bigg (\fint_{B_{2R}}  (u+R)^{\ell_0} \, dx  \bigg) \bigg(\fint_{B_{2R}} (u+R)^{-\ell_0} \, dx  \bigg) \leq C.
\end{align*}
Let us note that under assumption (2), condition \eqref{eq:vBound} holds by 
the Poincar\'e inequality and Jensen's inequality, see Proposition~\ref{prop:vBound}.
By the previous inequality and Theorem~\ref{thm:essinf} we obtain
\[
\bigg (\fint_{B_{2R}} (u+R)^{\ell_0} \, dx  \bigg)^{\frac{1}{\ell_0}} 
\lesssim 
\bigg (\fint_{B_{2R}} (u+R)^{-\ell_0} \, dx  \bigg)^{-\frac{1}{\ell_0}}  
\lesssim 
\essinf_{B_R} u + R .  
\]

Assume then that \eqref{eq:betaBound} holds with $\beta>\max\{\frac np,1\}$.  
Then by Proposition~\ref{prop:betaCondition} we  have
for any $\ell_0>0$ and $\ell< \ell(p)$,  that
\begin{align*}
\Big(\fint_{B_{4R}} (u+2R)^{\ell} \, dx  \Big)^{\frac{1}{\ell}} 
\lesssim 
\Big(\fint_{B_{2R}} (u+2R)^{\ell_0} \, dx  \Big)^{\frac{1}{\ell_0}},
\end{align*}
and hence the claim follows when we combine this with the previous inequality. 
(With this proof we obtained the ball $B_{4R}$ on the right-hand side. Proving the 
intermediate results for $B_{\sqrt 2 R}$ give $B_{2R}$.) 
\end{proof}

%%%%%%%%%%%%%%%%%%%%%%%%%%%%%%%%%%%%%%%%%%%%%%%%%%%%%%%%%%%%%%%%%%%%%%%%%%%%%%%%%%%
%%%%%%%%%%%%%%%%%%%%%%%%%%%%%%%%%%%%%%%%%%%%%%%%%%%%%%%%%%%%%%%%%%%%%%%%%%%%%%%%%%%
%%%%%%%%%%%%%%%%%%%%%%%%%%%%%%%%%%%%%%%%%%%%%%%%%%%%%%%%%%%%%%%%%%%%%%%%%%%%%%%%%%%

\section{Special cases}\label{sect:special}

Let us next investigate conditions \eqref{eq:vBound} and \eqref{eq:betaBound}. 
We define the ``Sobolev conjugate'' $\omega^\#\in\Phiw$ by the condition 
$(\omega^\#)^{-1}(t) := t^{-1/n} \omega^{-1}(t)$ for $t>0$. Note that for $\omega^\#$ 
to be in $\Phiw$, it must be increasing and so $\omega$ must satisfy \dec{n}, see \cite[Lemma~5.2.3]{HarH19}.
If $\omega(t)=t^s$, then $\omega^\#(t)=t^{s^*}$, where $s^*:=\frac{ns}{n-s}$ is 
the Sobolev exponent. Also $s=n$ is included and in this case $\|u\|_{\omega^\#} = \|u\|_\infty$. 
Thus we must not assume that $\omega^\#$ satisfies \adec{}, which means that extra care 
must be taken in the next proof regarding constants and inverse functions, but tools for this were 
established in \cite{HarH19}.

Note that (2) implies (1) if we have a Sobolev inequality, but then we have difficulties 
with the case $s\ge n$ in (2), so we provide separate proofs for the two cases. 

\begin{prop}\label{prop:vBound}
Let $B_R \subset \Omega$, $R \le 1$ and
let $\omega\in \Phiw(\Omega)$ satisfy \azero{} and \adec{}. 
Assume that $\phi\in \Phiw(\Omega)$ satisfies \aones{\omega} and that one of the 
following hold:
\begin{enumerate}
\item
$u\in L^{\omega^\#}(B_R)$ with $\|u\|_{\omega^\#}\le d$, where $\omega^\#\in \Phiw(\Omega)$.
%\item
%$v\in L^{\omega}(B_R)$ with $\rho_{\omega}(v)\le 1$;
\item
$u\in W^{1,\omega}(B_R)$ with $\|u\|_{1,\omega}\le d$.
\end{enumerate}
Then \eqref{eq:vBound} holds, for $v:=\frac{u+R}R$, i.e.\
\[
\omega_{B_R}^-\bigg(\fint_{B_R} v \, dx \bigg) \le \frac {d_2}{|B_R|}. 
\]
\end{prop}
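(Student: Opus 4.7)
The plan is to use Jensen's inequality in both cases, exploiting the Sobolev-conjugate identity $(\omega^\#)^{-1}(t) = t^{-1/n}\omega^{-1}(t)$ so that the factor $R \approx |B_R|^{1/n}$ arising from Jensen's inequality cancels the $1/R$ hidden in $v = (u+R)/R$. For case~(1), the Luxemburg-norm bound $\|u\|_{L^{\omega^\#}(B_R)} \le d$ translates into the modular inequality $\int_{B_R} \omega^\#(x, u/d)\, dx \le 1$, and Jensen's inequality applied to (the convex equivalent of) $(\omega^\#)^-_{B_R}$ yields
\[
(\omega^\#)^-_{B_R}\Big(\fint_{B_R} \tfrac{u}{d}\, dx\Big) \le \fint_{B_R} \omega^\#(x, u/d)\, dx \le \frac{1}{|B_R|}.
\]
Inverting and using $((\omega^\#)^-_{B_R})^{-1}(t) = t^{-1/n}(\omega^-_{B_R})^{-1}(t)$ with $|B_R|^{1/n} \approx R$ gives $\fint_{B_R} u\, dx \lesssim dR\,(\omega^-_{B_R})^{-1}(1/|B_R|)$, whence
\[
\fint_{B_R} v\, dx = 1 + \fint_{B_R} \tfrac{u}{R}\, dx \lesssim 1 + d\,(\omega^-_{B_R})^{-1}(1/|B_R|).
\]
Since $R \le 1$, \azero{} yields $1 \lesssim (\omega^-_{B_R})^{-1}(1/|B_R|)$, and \adec{} lets me absorb the multiplicative constant into the argument of the inverse, giving $\fint_{B_R} v\, dx \le (\omega^-_{B_R})^{-1}(d_2/|B_R|)$; applying $\omega^-_{B_R}$ then finishes case~(1).

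For case~(2) the goal is again to bound $\fint v = 1 + u_{B_R}/R$, now using the assumption $\|u\|_{1,\omega} \le d$. When $\omega$ satisfies \dec{n} so that $\omega^\# \in \Phiw$, the natural route is to invoke the Sobolev embedding $\|u\|_{L^{\omega^\#}(B_R)} \lesssim \|u\|_{W^{1,\omega}(B_R)}$ and reduce directly to case~(1). More generally, I would combine Jensen's inequality (giving the scalar bounds $|u_{B_R}|$ and $\fint|\nabla u|\,dx \lesssim d(\omega^-_{B_R})^{-1}(1/|B_R|)$ from the two pieces of $\|u\|_{1,\omega}\le d$) with a $W^{1,1}$-type Poincar\'e inequality $\fint|u-u_{B_R}|\,dx \lesssim R\fint|\nabla u|\,dx$, and then run the same \azero{}-and-\adec{} endgame as in case~(1).

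The main obstacle I foresee is case~(2) when $\omega$ fails \dec{n}, so that $\omega^\#$ is not a weak $\Phi$-function and the clean Sobolev reduction is unavailable; this is exactly the $s\ge n$ regime flagged in the paragraph preceding the proposition and is the reason the two cases are treated separately rather than deducing (1) as a corollary of (2). In that regime one must replace the Sobolev embedding by a Morrey-type bound and carry the Poincar\'e estimate explicitly. A subsidiary technical point is that $\omega^-_{B_R}$ is only a weak $\Phi$-function under the standing hypotheses, so before each application of Jensen's inequality one should quietly pass to its convex equivalent, as provided by the theory developed in \cite{HarH19}.
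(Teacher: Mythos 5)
Your case (1) is essentially the paper's own argument: reduce to the $x$-independent function $\omega^-_{B_R}$, convert the norm bound into a modular bound, apply Jensen's inequality to (a convex equivalent of) $\omega^\#$, invert using $(\omega^\#)^{-1}(t)=t^{-1/n}\omega^{-1}(t)$ so that $|B_R|^{1/n}\approx R$ supplies exactly the factor $R$ hidden in $v=\frac{u+R}{R}$, and finish with \azero{} and \adec{}. The only detail you pass over is that for a weak $\Phi$-function one has $(\omega^\#)^{-1}\big(\omega^\#(t)\big)\approx t$ only in the non-degenerate case; the paper treats the alternative $\omega^\#(t)=0$ separately using \dec{n} of $\omega$, but this is a minor point.

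Case (2) is where you diverge from the paper, and your fallback argument has a genuine gap. The paper uses neither $\omega^\#$, nor Morrey estimates, nor a Poincar\'e inequality there: it writes $\fint_{B_R} v\,dx \lesssim R^{-n}\|u+R\|_{L^{n'}(B_R)}$ by H\"older with $n'=\frac n{n-1}$, then uses an extension together with the $W^{1,1}\to L^{n'}$ Sobolev embedding to arrive at $\fint_{B_R} v\,dx \lesssim \fint_{B_R}\big(u+R+|\nabla u|\big)\,dx$, and only then applies Jensen with $\omega$ itself, the modular bounds on $u$ and $\nabla u$ producing the factor $|B_R|^{-1}$. Since only the exponent-$n'$ embedding is used, no \dec{n}-type restriction and no Morrey substitute are needed; this is exactly why the paper can state (2) for general $\omega$ and why the two cases are proved separately. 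Your fallback route (Jensen on the two pieces of $\|u\|_{1,\omega}\le d$ plus the $W^{1,1}$-Poincar\'e inequality) does not close: Poincar\'e only controls the oscillation $\fint_{B_R}|u-u_{B_R}|\,dx\lesssim R\fint_{B_R}|\nabla u|\,dx$, while the remaining term $u_{B_R}/R$ is bounded only by $\tfrac dR(\omega^-_{B_R})^{-1}(1/|B_R|)$; the stray factor $1/R$ cannot be absorbed by the \azero{}/\adec{} endgame, since by \adec{q} one has $(\omega^-_{B_R})^{-1}(d_2/|B_R|)\lesssim d_2^{1/q}(\omega^-_{B_R})^{-1}(1/|B_R|)$, so absorbing it would force $d_2\gtrsim R^{-q}$. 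The needed gain of a factor $R$ must come from an improvement of integrability of Sobolev type ($L^1\to L^{n'}$ in the paper, $L^\omega\to L^{\omega^\#}$ in your primary suggestion), not from Poincar\'e. Your primary route is thus the viable one, but it is restricted to \dec{n} as you note, and in either route the dependence on $R$ of the Sobolev/extension constant on the small ball $B_R$ is the genuinely delicate point and should be tracked explicitly (the paper's own extension step is also rather brisk about this), rather than deferred to ``the same endgame as case (1)''.
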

\begin{proof}
By an elementary embedding, $\|u\|_{W^{1,\omega_{B}^-}(B)}\lesssim \|u\|_{W^{1,\omega}(\Omega)}$. 
Therefore it suffices to prove the result for $\omega\in \Phiw$ (i.e.\ $\omega$ independent of $x$) 
and apply this result to $\omega_ {B_R}^-$, and similarly in the case of assumption (1). 

Let us first use assumption (1). 
Jensen's inequality \cite[Lemma 4.3.1]{HarH19}, and \azero{} of $\omega^\#$ yield that, for sufficiently 
small $d'>0$, 
\[
\begin{split}
\omega^\#\bigg(2d'\fint_{B_R} \tfrac{u+R}2\, dx\bigg) 
&\le \fint_{B_R} \omega^\#(\tfrac{u+\beta_0}2)\, dx  
\lesssim 
|B_R|^{-1}+1 \lesssim |B_R|^{-1}
\end{split}
\]
with $\beta_0$ from \azero{}. 
By \cite[Lemma~2.3.9]{HarH19}, $(\omega^\#)^{-1}\big(\omega^\#(t)\big)\approx t$ or $\omega^\#(t)=0$. 
In the former case, 
\[
\fint_{B_R} u+R\, dx 
\lesssim 
(\omega^\#)^{-1}(c \,|B_R|^{-1})
=
c' R \omega^{-1}(c \,|B_R|^{-1})
\approx 
R \omega^{-1}(|B_R|^{-1}).
\]
Since $\omega$ satisfies \adec{}, we can move the constants outside.
We divide both sides by $R$ and apply $\omega$ to obtain 
$\omega(\fint_{B_R} v\, dx)\lesssim |B_R|^{-1}$, i.e.\ \eqref{eq:vBound}. 
In the case $\omega^\#(t)=0$ we conclude from \azero{} and \dec{n} of $\omega$ that 
$t\le d''$ and so 
\[
\omega\bigg(\fint_{B_R} v\, dx\bigg) 
\le 
\omega\bigg(\frac{1}{d' R} 2d'\fint_{B_R} \tfrac{u+R}2\, dx\bigg) 
\le 
\omega\bigg(\frac{d''}{d' R}\bigg) 
\lesssim 
R^{-n}
\]
also by \dec{n} of $\omega$. Thus we have \eqref{eq:vBound} under assumption (1). 

For the other case, we first use  Hölder's inequality to obtain
\[
\fint_{B_R} v\, dx \lesssim R^{-n} \Big(\int_{B_R}  (u+R)^{n'}\, dx\Big)^\frac1{n'}.
\]
Then we use that $B_R$ is a $W^{1,1}$-extension domain, 
extend $u+R$ to $\Rn$ and denote this extension by $v$. We obtain by the $W^{1,1}$-Sobolev embedding
\[
\|u+R\|_{L^{n'}(B_R)}
\le 
\|v\|_{L^{n'}(\Rn)}
\lesssim \|v\|_{W^{1,1}(\Rn)}
\lesssim \|u+R\|_{W^{1,1}(B_R)}. 
\]
Combining the above steps, we have
\[
\fint_{B_R} v\, dx \le \fint_{B_R}  u+R+ |\nabla u| \, dx.
\]
Jensen's inequality, \dec{} of $\omega$, \azero{} of $\omega$ , $R \le 1$, and $\|u\|_{1,\omega}\le d$ yield
\[
\begin{split}
\omega\Big( \fint_{B_R} v\, dx \Big) &\le  \omega\Big( \fint_{B_R} u+R+|\nabla u|\, dx \Big) 
\le \fint_{B_R} \omega (u+R+|\nabla u|)\, dx\\
&\lesssim  \fint_{B_R} \omega (u)+\omega(|\nabla u|) + \omega(1)\, dx \lesssim \frac1{|B_R|}.\qedhere
\end{split}
\] 
\end{proof}

Note that in the next proposition if we consider case (2) with $\omega=\phi$, then $s=p$ and 
$s^*>\max\{\frac np,1\} (q-p)$ is equivalent to $p^* > q$, the condition from Theorem~\ref{thm:weakHarnack}. 

\begin{prop}\label{prop:betaBound}
Let $\omega\in \Phiw(\Omega)$ satisfy \azero{}, \adec{}, and \ainc{s}, $s\ge 1$. 
Let $\phi\in \Phiw(\Omega)$ satisfy \azero{}, \aones{\omega} and \adec{} and assume that one of the 
following hold:
\begin{enumerate}
\item
$u\in L^{\omega^\#}(B_R)$ with $\|u\|_{\omega^\#}\le d$, where $\omega^\#\in \Phiw(\Omega)$.
\item
$u\in W^{1,\omega}(B_R)$ with $\|u\|_{1,\omega}\le d$.
\end{enumerate}
If $s^*\ge\beta (q-p)$, then \eqref{eq:betaBound} holds, 
i.e.\ 
\[
\fint_{B_r} \Big( \frac{\phi(x, v)}{\phi_{B_r}^-(v)}\Big)^\beta\, dx \le C
\]
for and $B_r\subset B_R$ and $v:=\frac{u+r}r$. 
\end{prop}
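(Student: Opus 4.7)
The plan is to split $B_r$ based on whether \aones{\omega} directly controls the integrand. Set
\[
V := (\omega^-_{B_r})^{-1}\Big(\tfrac1{|B_r|}\Big), \qquad E := \{x \in B_r : v(x) > V\},
\]
and let $V^\# := (\omega^\#)^{-1}(1/|B_r|)$, which by the defining relation $(\omega^\#)^{-1}(t) = t^{-1/n}\omega^{-1}(t)$ equals a constant times $rV$ and satisfies $\omega^\#(V^\#) \approx 1/|B_r|$. The threshold $V$ is precisely the level above which \aones{\omega} no longer applies to $v$, while $V^\#$ is the natural scale at which the $L^{\omega^\#}$-norm of $u$ interacts with the ball $B_r$.

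On $B_r\setminus E$, the inequality $v \ge 1$ combined with \azero{} of $\omega$ puts $\omega^-_{B_r}(v)$ essentially in the window $[1,1/|B_r|]$, so \aones{\omega} applied to $\phi$, together with \adec{q} and \azero{} of $\phi$, yields the pointwise bound $\phi(x,v) \lesssim \phi^-_{B_r}(v)$. Hence this part contributes $\lesssim |B_r|$. On $E$ I first apply \aones{\omega} at $t = V$ (where $\omega^-_{B_r}(V)=1/|B_r|$) to obtain $\phi(x,V) \lesssim \phi^-_{B_r}(V)$, and then extrapolate using \adec{q} of $\phi$ and \ainc{p} of $\phi^-_{B_r}$ to get
\[
\frac{\phi(x,v)}{\phi^-_{B_r}(v)} \lesssim \Big(\frac{v}{V}\Big)^{q-p} \quad \text{for } v \ge V.
\]

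On $E$, the bound $v \ge V$ forces $u = rv-r \gtrsim rV = V^\#$ (one may assume $V\ge 2$; otherwise $V\lesssim 1$, $r \gtrsim 1$, and the whole integral is controlled by elementary bounds). Consequently $v/V \lesssim u/V^\#$. Using the hypothesis $s^* \ge \beta(q-p)$ together with the fact that \ainc{s} of $\omega$ lifts to an \ainc{s^*}-type property of $\omega^\#$, we obtain $(u/V^\#)^{\beta(q-p)} \lesssim \omega^\#(u/V^\#)$ on $\{u \gtrsim V^\#\}$, hence on $E$. The a priori bound $\|u\|_{\omega^\#} \le d$ yields $\int_{B_R} \omega^\#(u/d) \lesssim 1$; rescaling between the scales $d$ and $V^\#$ via the \ainc{s^*} (if $V^\#\ge d$) and \adec{} (if $V^\#<d$) properties of $\omega^\#$ converts this into $\int_E \omega^\#(u/V^\#) \lesssim 1/\omega^\#(V^\#) \approx |B_r|$, completing case (1).

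Case (2) reduces to case (1) via the Sobolev-Poincaré inequality $\|u\|_{L^{\omega^\#}(B_R)} \lesssim \|u\|_{W^{1,\omega}(B_R)}$ whenever $\omega^\#\in\Phiw(\Omega)$; in the degenerate regime where $\omega$ fails \dec{n} (so $\omega^\#$ is effectively an $L^\infty$-norm), the Sobolev-Morrey embedding yields $u\in L^\infty$, which forces $E$ to be empty for small $r$ and makes the estimate trivial, while for $r$ bounded below we bound the integrand crudely. The main obstacle will be the careful bookkeeping of constants in the final rescaling step between $d$ and $V^\#$ and the reduction from $x$-dependent $\omega$ to the scalar $\omega^-_{B_r}$, analogous to the opening paragraph of the proof of Proposition~\ref{prop:vBound}.
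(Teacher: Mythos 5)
Your decomposition at $V=(\omega^-_{B_r})^{-1}(1/|B_r|)$ and the extrapolation $\phi(x,v)/\phi^-_{B_r}(v)\lesssim (v/V)^{q-p}$ on $\{v\ge V\}$ are exactly the paper's first two steps, and your reading of $V^\#\approx rV$ is also the identity the paper exploits. The gap is in how you integrate $(v/V)^{\beta(q-p)}$ in case (1). You pass pointwise to $\omega^\#(u/V^\#)$ and then claim that $\int_{B_R}\omega^\#(u/d)\,dx\le 1$ can be ``rescaled between the scales $d$ and $V^\#$'' to give $\int_E\omega^\#(u/V^\#)\,dx\lesssim 1/\omega^\#(V^\#)\approx|B_r|$. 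This rescaling needs an \emph{upper} growth bound on $\omega^\#$: for $V^\#\ge d$, \ainc{s^*} only yields $\omega^\#(u/V^\#)\lesssim (d/V^\#)^{s^*}\omega^\#(u/d)$, hence $\int_E\omega^\#(u/V^\#)\lesssim (d/V^\#)^{s^*}$, and to reach $1/\omega^\#(V^\#)$ you would need $\omega^\#(V^\#)\lesssim (V^\#/d)^{s^*}$, i.e.\ essentially \adec{s^*} of $\omega^\#$. This is neither assumed nor true in general: for $\omega(t)=t^s\log(e+t)$ one has $\omega^\#(t)\approx t^{s^*}\log(e+t)^{s^*/s}$ and the claimed modular bound itself fails by a power of a logarithm (test $u=\lambda\chi_A$ with $\omega^\#(\lambda)|A|=1$ and $\lambda=(V^\#)^2$, $d=1$), and at the endpoint $s=n$ the function $\omega^\#$ is essentially the $L^\infty$-modular and satisfies no \adec{} at all --- the paper warns explicitly that $\omega^\#$ must not be assumed to satisfy \adec{}, so your appeal to ``\adec{} of $\omega^\#$'' in the regime $V^\#<d$ has the same problem.

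The repair is to compare $\omega^\#$ at the two arguments $u$ and $V^\#$ directly, which uses only the lower bound \ainc{\beta(q-p)} of $\omega^\#$ (available since $\omega^\#$ inherits \ainc{s^*} and $s^*\ge\beta(q-p)$): on $E$ one has $u\gtrsim rV\approx V^\#$, so
\[
\Big(\frac u{V^\#}\Big)^{\beta(q-p)}\lesssim\frac{\omega^\#(u)}{\omega^\#(V^\#)}\approx |B_r|\,\omega^\#(u),
\qquad\text{hence}\qquad
\fint_{B_r\cap E}\Big(\frac vV\Big)^{\beta(q-p)}dx\lesssim\int_{B_r}\omega^\#(u)\,dx\lesssim 1
\]
by the modular bound; this is precisely the content of the paper's Jensen-type step with $t\mapsto\omega^\#(t^{1/(\beta(q-p))})$ satisfying \ainc{1} and the identity $(\omega^\#)^{-1}(|B_r|^{-1})=|B_r|^{1/n}\omega^{-1}(|B_r|^{-1})\approx rV$, and it never requires an upper bound on $\omega^\#$. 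Note also that the paper proves case (2) separately (H\"older with exponent $s^*$, the classical $W^{1,s}$-Sobolev inequality, then Jensen with \ainc{s} of $\omega$) precisely to avoid the Orlicz--Sobolev embedding $\|u\|_{\omega^\#}\lesssim\|u\|_{1,\omega}$ on which your reduction of (2) to (1) relies and which is delicate for $s\ge n$ --- the issue you flag but do not resolve, and which disappears once (2) is argued directly.
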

\begin{proof}
Fix $B_r\subset B_R$. 
As in the previous proof, it suffices to consider $\omega\in \Phiw$ independent of $x$. 
Points where $u(x)\le r$ make only a constant contribution to the integral, 
so we may assume that $u>r$ and take $v=\frac ur$ for simplicity.  Here we used \azero{} and \adec{} of $\phi$.

We denote $V_r:=\omega^{-1}(|B_r|^{-1})$ 
and $E:=\{v<V_r\}$. Then $\omega (V_r) \le \frac1{|B_r|}$ and  \aones{\omega} of $\phi$ yields 
\[
\frac1{|B_r|} \int_{B_r\cap E} \Big( \frac{\phi(x, v)}{\phi^-_{B_r}(v)}\Big)^\beta\, dx
\approx 
\frac1{|B_r|} \int_{B_r\cap E} \Big( \frac{\phi(x, v)}{\phi(x,v)}\Big)^\beta\, dx
\le 1;
\]
we used $v>1$ and \azero{} for the lower bound of \aones{\omega}.
This holds in both case (1) and (2), since these assumption were not used yet. 

In $B_r\setminus E$ we have 
\[
\frac{\phi(x, v)}{\phi^-_{B_r}(v)} 
\lesssim 
\frac{\phi(x, V_r)}{\phi^-_{B_r}(V_r)} (\tfrac v {V_r})^{q-p}
\approx 
(\tfrac v {V_r})^{q-p}
\]
by \ainc{p}, \adec{q} and \aones{\omega}. 
Consider first the case $\|u\|_{\omega^\#}\le d$.
By the definition of $(w^\#)^{-1}$, we obtain that $(w^\#)^{-1}$ satisfies \adec{\frac1s - \frac1n} 
and hence $w^\#$ satisfies \ainc{\frac{sn}{n-s}}
and thus also \ainc{\beta(q-p)}. Therefore
$t\mapsto \omega^\#(t^{1/(\beta(q-p))})$ satisfies \ainc{1}, and hence a Jensen-type inequality \cite[Lemma 4.3.1]{HarH19} yields
\[
\begin{split}
\bigg(\fint_{B_r} (\tfrac v{V_r})^{\beta(q-p)}\, dx\bigg)^\frac1{\beta(q-p)}
&\le 
\frac1{r V_r} (\omega^\#)^{-1}\bigg( \fint_{B_r} \omega^\#(u)\, dx \bigg)
\lesssim
\frac{(\omega^\#)^{-1}( |B_r|^{-1} )}{r V_r}\\
&=  \frac{ |B_r|^{1/n} \omega^{-1}( |B_r|^{-1} )}{r V_r} \approx 1
\end{split}
\]
where we used $\rho_{\omega^\#}(u)\le C$ and the definition of $V_r$. 
This completes the estimate in case (1). 

In the case $\|u\|_{1,\omega}\le d$, the H\"older inequality (with $\beta(q-p)\le s^*$), 
the Sobolev inequality and the Jensen inequality (with \ainc{p}) give 
\[
\begin{split}
\bigg(\fint_{B_r} (\tfrac v {V_r})^{\beta(q-p)}\, dx\bigg)^\frac1{\beta(q-p)}
&\le 
\frac1 {V_r} \bigg(\fint_{B_r} (\tfrac ur)^{s^*}\, dx\bigg)^\frac1{s^*}
\lesssim
\frac1 {V_r} \bigg(\fint_{B_r} u^s+|\nabla u|^s\, dx\bigg)^\frac1{s} \\
&\lesssim 
\frac1{V_r} \omega^{-1}\bigg( \fint_{B_r} \omega(u) + \omega(|\nabla u|) \, dx \bigg)
\lesssim
\frac{\omega^{-1}( |B_r|^{-1} )}{V_r}= 1. \qedhere
\end{split}
\]
\end{proof}

We can now state the weak Harnack inequality for the variable exponent case as a corollary. 
For notation and terminology we refer to \cite{DieHHR11}. Note that here we 
can choose any $s>0$. 

\begin{cor}
Let $\phi(x,t):=t^{p(x)}$ be the variable exponent functional 
and let $u$ be a non-negative solution to \eqref{eq:A-eq} on $B_{2R}$.
We assume that $p$ is $\log$-H\"older continuous with constant $c_{\log}$, $1<p^-\le p^+<\infty$, 
and $\|u\|_s \le d$ for some $s\in (0,\infty]$. 
Then for every  $0<\ell_0< \ell(p^-_{B_R})$, there exists 
$C = C(p^-, p^+, \ell_0,c_{\log}, d, n)$ and $R_0$ such that
\begin{align*}
\bigg (\fint_{B_{2R}} (u+R)^{\ell_0} \, dx \bigg)^{\frac{1}{\ell_0}}  \le C (\essinf_{B_R} u + R)
\quad\text{for all } r \le R_0.
\end{align*}
\end{cor}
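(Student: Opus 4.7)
The plan is to derive the corollary from Theorem~\ref{thm:weakHarnackGeneral} (in its packaged form Theorem~\ref{thm:weakHarnack}(1)) applied to $\phi(x,t)=t^{p(x)}$.

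First, following Table~\ref{tab:examples}, the bounds $1<p^-\le p^+<\infty$ immediately give \ainc{p^-_{B_{2R}}} and \adec{p^+_{B_{2R}}}, while \azero{} is trivial. The essential structural check is \aones{s_*} with $s_*=\tfrac{ns}{n+s}$. Unwinding the definition, this amounts to $\beta^{p(x)}t^{p(x)-p(y)}\le 1$ for $t\in [1,|B|^{-1/s_*}]$ and $x,y\in B$. In the only nontrivial case $p(x)>p(y)$, taking logarithms and using $-\log|B|\approx n\log(1/r)$ for $B=B_r$, together with the log-Hölder estimate $p(x)-p(y)\le c_{\log}/\log(e+1/(2r))$, bounds the left-hand side by a constant; hence $\beta$ can be chosen depending only on $c_{\log},n,s,p^+$.

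Next, I would use log-Hölder continuity to obtain the quantitative smallness needed by Theorem~\ref{thm:weakHarnack}(1). The oscillation satisfies $p^+_{B_{2R}}-p^-_{B_{2R}}\le c_{\log}/\log(1/(2R))\to 0$ as $R\to 0$, so for any fixed $s\in(0,\infty]$ we may choose $R_0$ with
\[
s > \max\{n/p^-,1\}\,(p^+_{B_{2R}}-p^-_{B_{2R}}) \quad\text{for all } R\le R_0.
\]
Combined with the assumption $\|u\|_s\le d$, Theorem~\ref{thm:weakHarnack}(1) yields the weak Harnack inequality with any exponent $\ell_0<\ell(p^-_{B_{2R}})$.

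The remaining---and most delicate---step is to pass from the theorem's exponent $\ell(p^-_{B_{2R}})$ to the corollary's $\ell(p^-_{B_R})$. Since $B_R\subset B_{2R}$, we have $p^-_{B_R}\ge p^-_{B_{2R}}$, so the corollary claims a formally stronger conclusion. Using once more $p^-_{B_R}-p^-_{B_{2R}}\le c_{\log}/\log(1/(2R))$ together with the local Lipschitz continuity of $\ell$ on $[p^-,p^+]\cap(1,n)$ (the case $p^-\ge n$ being trivial, as $\ell\equiv\infty$ there), a further reduction of $R_0$ depending only on $\ell_0, p^-, p^+, c_{\log}, n$ guarantees that $\ell_0<\ell(p^-_{B_{2R}})$ whenever $\ell_0<\ell(p^-_{B_R})$ and $R\le R_0$. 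This matches the claimed exponent range and completes the reduction.
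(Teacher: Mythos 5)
Your first three paragraphs are essentially the paper's own proof: the paper also takes $\omega(t)=t^{s_*}$, reads \aones{s_*} as $\beta^{p(x)}t^{p(x)-p(y)}\le 1$ for $t\in[1,|B|^{-1/s_*}]$ and settles it by log-H\"older continuity, and uses the log-H\"older decay of the oscillation $p^+_{B}-p^-_{B}$ to choose $R_0$ so that the integrability threshold (there in the form $s\ge n(p^+_{B_R}-p^-_{B_R})$, with $\beta=n$) holds; the only difference is packaging, since the paper goes through Theorem~\ref{thm:weakHarnackGeneral} together with Propositions~\ref{prop:vBound}(1) and \ref{prop:betaBound}(1) rather than the summarized Theorem~\ref{thm:weakHarnack}(1). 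Up to that point your argument is fine.

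The gap is in your final step. The claim that a further reduction of $R_0$, depending only on $\ell_0,p^-,p^+,c_{\log},n$, forces ``$\ell_0<\ell(p^-_{B_{2R}})$ whenever $\ell_0<\ell(p^-_{B_R})$ and $R\le R_0$'' is false. Let $p_0\in(1,n)$ be the unique exponent with $\ell(p_0)=\ell_0$; the hypothesis says $p^-_{B_R}>p_0$ and the desired conclusion says $p^-_{B_{2R}}>p_0$. Log-H\"older continuity only gives $p^-_{B_{2R}}\ge p^-_{B_R}-c_{\log}/\log(1/(2R))$, so if $p^-_{B_R}$ lies within this margin above $p_0$ while $p$ dips just below $p_0$ in the annulus $B_{2R}\setminus B_R$, the conclusion fails; such exponents exist with arbitrarily small $c_{\log}$ and at every scale $R$, so no admissible $R_0$ excludes them. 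Your Lipschitz-continuity argument would need a quantitative lower bound on the gap $\ell(p^-_{B_R})-\ell_0$, which is not among the allowed data and cannot be, since $p^-_{B_R}$ itself depends on $R$ (the choice of $R_0$ would be circular); the parenthetical about $p^-\ge n$ also does not help when $p^-_{B_R}\ge n>p^-_{B_{2R}}$. Note that the paper does not bridge this point either: its machinery delivers exponents up to $\ell(p^-)$ of the ball on which \ainc{} is actually used (effectively $B_{2R}$, or $B_{4R}$ before the $\sqrt2$-rescaling remark), and the ``for all $r\le R_0$'' in the statement indicates the estimate is meant to be applied on small concentric balls $B_r$ with $B_{2r}\subset B_R$, for which $p^-_{B_{2r}}\ge p^-_{B_R}$ and the stated range $\ell_0<\ell(p^-_{B_R})$ is automatic. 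With your reading (the displayed inequality at the fixed scale $R$) the correct conclusion of your argument is the range $\ell_0<\ell(p^-_{B_{2R}})$, and the last paragraph should be dropped rather than repaired.
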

\begin{proof}
We check that the assumptions of Theorem~\ref{thm:weakHarnackGeneral} are fulfilled. 
Define $s_*:=\frac{ns}{n+s}$ so that $(s_*)^* = s$ and let $\omega(t):=t^{s_*}$.
Now \aones{s_*} reads as $\beta^{p(x)} t^{p(x) - p(y)} \le 1$ for $t \in [1, |B|^{-\frac{1}{s_*}}]$. 
Since $p$ is $\log$-H\"older continuous, this holds (see \cite[Section~7.1]{HarH19} for details). 
Thus by  Proposition~\ref{prop:vBound} (1) condition \eqref{eq:vBound} holds.

Let $\beta:=n >\frac{n}{p^-_{B_R}}$. We  choose $R_0$ so small that $n(p_{B_R}^+ - p_{B_R}^-)< s$ for $R \le R_0$. Then $\omega^\#(t) = t^s$ satisfies \ainc{\beta(p_{B_R}^+ - p_{B_R}^-)}, and hence \eqref{eq:betaBound} holds  by Proposition~\ref{prop:betaBound} (1). Thus the exponent can be chosen up to $\ell(p^-_{B_r})$.
\end{proof}

%%%%%%%%%%%%%%%%%%%%%%%%%%%%%%%%%%%%%%%%%%%%%%%%%%%%%%%%%%%%%%%%%%%%%%%%%%%%%%%%%%%
%%%%%%%%%%%%%%%%%%%%%%%%%%%%%%%%%%%%%%%%%%%%%%%%%%%%%%%%%%%%%%%%%%%%%%%%%%%%%%%%%%%
%%%%%%%%%%%%%%%%%%%%%%%%%%%%%%%%%%%%%%%%%%%%%%%%%%%%%%%%%%%%%%%%%%%%%%%%%%%%%%%%%%%

\section{The double phase case and counter-examples}\label{sect:example}

Let us study the double phase case. Note that when $s=\infty$ in the next result we obtain 
the special case of bounded supersolutions from \cite{BenK_pp}.

\begin{cor}\label{cor:doublePhase-2}
Let $\phi(x,t):=t^p + a(x)t^q$ be the double phase functional 
and let $u$ be a non-negative supersolution to \eqref{eq:A-eq} on $B_{2R}$.
We assume that $a\in C^{0,\alpha}$ and $\|u\|_{s} \le d$, $s\in (0,\infty]$. If 
$\alpha \ge (\tfrac{n}s+1)(q-p)$ and $s\ge q-p$, then 
there exist  positive constants $\ell_0$ and $C$, such that
\begin{align*}
\bigg (\fint_{B_{2R}} (u +R)^{\ell_0} \, dx \bigg)^{\frac{1}{\ell_0}}  \le C (\essinf_{B_R} u + R).
\end{align*}
Furthermore, if $s>\max\{\frac np,1\}(q-p)$, then the inequality holds for any $\ell_0<\ell(p)$. 
\end{cor}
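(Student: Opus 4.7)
The plan is to apply Theorem~\ref{thm:weakHarnackGeneral}(1) with the auxiliary $\Phi$-function
\[
\omega(t) := t^{s_*}, \qquad s_* := \tfrac{ns}{n+s}
\]
(with the endpoint interpretation $s_* = n$ and $\omega^\# \leftrightarrow L^\infty$ when $s = \infty$). This choice is forced by the identity $(s_*)^* = s$, which makes the Sobolev conjugate $\omega^\#(t) = t^s$ match the hypothesis $\|u\|_{L^s(B_{2R})} \le d$ exactly, so that Propositions~\ref{prop:vBound} and \ref{prop:betaBound} can be invoked directly. The general structural hypotheses on $\phi$, namely \azero{}, \ainc{p} and \adec{q}, are immediate for the double phase functional with $a\in L^\infty$ and $1 < p \le q < \infty$; \azero{} and \adec{} of $\omega$ are trivial.

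The crucial verification is \aones{\omega}, i.e.\ \aones{s_*}, for $\phi(x,t) = t^p + a(x)t^q$. As Table~\ref{tab:examples} records, this condition reads $a \in C^{0, \frac{n}{s_*}(q-p)}$, and since
\[
\frac{n}{s_*} = \frac{n+s}{s} = 1 + \frac{n}{s},
\]
the corollary's hypothesis $\alpha \ge (\tfrac{n}{s}+1)(q-p)$ supplies exactly this Hölder regularity.

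With \aones{\omega} in hand, I would then apply the two propositions of Section~\ref{sect:special}. Proposition~\ref{prop:vBound}(1), via $\omega^\#(t) = t^s$ and $\|u\|_s \le d$, produces \eqref{eq:vBound}. Proposition~\ref{prop:betaBound}(1) requires $(s_*)^* = s \ge \beta(q-p)$, which for $\beta = 1$ is precisely the standing hypothesis $s \ge q-p$, yielding \eqref{eq:betaBound}. Theorem~\ref{thm:weakHarnackGeneral}(1) then delivers the weak Harnack inequality for some $\ell_0 > 0$.

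For the improvement to any $\ell_0 < \ell(p)$, the stronger assumption $s > \max\{\tfrac{n}{p}, 1\}(q-p)$ permits the choice $\beta \in \bigl(\max\{\tfrac{n}{p},1\},\, \tfrac{s}{q-p}\bigr]$, so Proposition~\ref{prop:betaBound}(1) still gives \eqref{eq:betaBound} with this enlarged $\beta$, and the final statement of Theorem~\ref{thm:weakHarnackGeneral} yields the sharpened conclusion. The entire argument is really just parameter-matching; the only conceptual content is the observation that the sharp exponent $(\tfrac{n}{s}+1)(q-p)$ appearing in the corollary is nothing but the exponent $\tfrac{n}{s_*}(q-p)$ built into \aones{s_*} for the double phase functional. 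Consequently, no step presents a genuine obstacle beyond keeping the endpoint $s=\infty$ straight, where $\omega^\#$ is interpreted as the $L^\infty$-space and all conditions reduce to the bounded-supersolution setting of \cite{BenK_pp}.
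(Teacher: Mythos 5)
Your proposal is correct and follows essentially the same route as the paper: choose $\omega(t)=t^{s_*}$ with $s_*=\frac{ns}{n+s}$ so that $\omega^\#(t)=t^s$, verify \aones{s_*}, and feed Propositions~\ref{prop:vBound}(1) and \ref{prop:betaBound}(1) (with $\beta=1$, resp.\ $\beta>\max\{\frac np,1\}$) into Theorem~\ref{thm:weakHarnackGeneral}. The only cosmetic difference is that you justify \aones{s_*} by citing Table~\ref{tab:examples}, which the paper offers only as intuition; the paper instead carries out the short direct estimate $\phi(x,t)\le(1+cR^{\alpha}t^{q-p})\phi(y,t)$ for $t\lesssim R^{-n/s_*}$, and you should include that two-line computation rather than appeal to the table.
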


\begin{proof}
Let us show that assumption (1) of Theorem~\ref{thm:weakHarnackGeneral} is fulfilled.
Let $\omega(t):=t^{s_*}$, where $s_*= \frac{ns}{n+s}$.
Let $x,y\in B_R$ and $t\in [1, |B_R|^{-1/s_*}]$. Then 
\[
\phi(x,t) = t^p + a(x) t^q 
\le
t^p + (a(y) + c R^\alpha) t^q 
\le
(1+cR^\alpha t^{q-p} ) \phi(y,t). 
\]
Since $t\lesssim R^{-\frac n{s_*}}$, the coefficient is bounded provided $\alpha -\frac n{s_*}(q-p)\ge 0$. 
Since $\frac n{s_*} = \frac ns + 1$, this is $\alpha \ge (\tfrac{n}s+1)(q-p)$. We have proved that $\phi$ satisfies \aones{s_*}. Now $(\omega^\#)^{-1}(t) = t^{-1/n} \omega^{-1}(t) = t^{-1/n + 1/s_*}= t^{1/s}$, and thus
$\rho_{\omega^\#}(u) = \rho_{L^s}(u) \le d^s$. Hence \eqref{eq:vBound} holds by Proposition~\ref{prop:vBound}. 
Let us then consider \eqref{eq:betaBound}. Since $\omega^\#(t) = t^{s}$, it satisfies \ainc{q-p} provided that $s\ge q-p$ and \ainc{\beta(q-p)} for $\beta>\max\{\frac np,1\}$ provided that $s> \max\{\frac np,1\}(q-p)$. 
Thus \eqref{eq:betaBound} follows by Proposition~\ref{prop:betaBound} and the claims follow by 
Theorem~\ref{thm:weakHarnackGeneral}.
\end{proof}

We will next give an example that  the weak Harnack inequality need not hold if 
$\alpha < (\frac{n}s+1)(q-p)$.
We consider the one-dimensional case and focus on the role of $s$. 
In the double phase (or $(p,q)$-growth) case Fonseca, Mal\'y and Mingione \cite{FonMM04} 
have given more sophisticated counter-examples for $s=p$, 
which is related to the assumption \aone{} in this case. See also \cite{BalDS_pp}. 
However, to the best of our knowledge, examples for $s\ne p$ have not been considered before.

Let $\phi\in \Phiw(\R)$ be defined by $\phi(x,0):=0$ and 
\[
\phi'(x,t) := \max\{t^{p-1}, a(x) t^{q-1}\},
\]
so that $\phi(x,t) \approx \max\{t^p, a(x) t^q\} \approx t^p + a(x) t^q$, the double phase functional. 
Let $u$ be a solution of $\big(\phi'(x,|u'|)\frac {u'}{|u'|}\big)'=0$ on the interval 
$(a,b)$. We assume that $\lim_{x \to a^+}u(x)< \lim_{x\to b^-}u(x)$, so $u$ is increasing and $\frac {u'}{|u'|}=1$. 
Then the differential equation reduces to $\phi'(x,u')\equiv c$, i.e.\ 
\[
u'(x) =
\begin{cases}
c^{\frac1{p-1}}, & \text{when } c^{-\frac{q-p}{p-1}} \ge a(x), \\
(c/a(x))^{\frac1{q-1}}, & \text{otherwise.} \\
\end{cases}
\]
We further assume that $a(x):=\max\{-x,0\}^\alpha$. Since $a$ is decreasing, 
we obtain that 
\[
u'(x) =
\begin{cases}
c^{\frac1{p-1}}, & \text{when } x \ge -x_0, \\
(c |x|^{-\alpha})^{\frac1{q-1}}, & \text{when } x < -x_0, \\
\end{cases}
\]
for $x_0 := c^{-\frac1\alpha \frac{q-p}{p-1}}$. Some solutions are illustrated in Figure~\ref{fig:solutions} for different values of $c$ with zero left boundary values at $x=-1$.

\begin{figure}[ht!]
\definecolor{cqcqcq}{rgb}{0.7529411764705882,0.7529411764705882,0.7529411764705882}
\begin{tikzpicture}[line cap=round,line join=round,>=triangle 45,x=4.0cm,y=1.0cm]
\draw [color=cqcqcq,, xstep=1.0cm,ystep=1.0cm] (-1.1,-0.1) grid (1.1,6.3);
\draw[->,color=black] (-1.1,0.) -- (1.1,0.);
\foreach \x in {-1,-0.5,0.5,1}
\draw[shift={(\x,0)},color=black] (0pt,2pt) -- (0pt,-2pt) node[below] {\footnotesize $\x$};
\draw[->,color=black] (0.,-0.5) -- (0.,6.3);
\foreach \y in {1,2,3,4,5,6}
\draw[shift={(0,\y)},color=black] (2pt,0pt) -- (-2pt,0pt) node[left] {\footnotesize $\y$};
\draw[color=black] (0pt,-10pt) node[right] {\footnotesize $0$};
\clip(-1.1,-0.4) rectangle (1.1,6.3);
\draw [line width=1.2pt,domain=-1:-(1.01)^(-18)] plot(\x,{2*1.01 - 2*1.01*sqrt((-\x))}) node{$\bullet$};
\draw [line width=1.2pt,domain=-(1.01)^(-18):1] plot(\x,{2*1.01 - 2*(1.01)^(-8)+(1.01)^(10)*((\x)+(1.01)^(-18)});
\draw [line width=1.2pt,domain=-1:-(1.1)^(-18)] plot(\x,{2*1.1 - 2*1.1*sqrt((-\x))}) node{$\scriptstyle\bullet$};
\draw [line width=1.2pt,domain=-(1.1)^(-18):1] plot(\x,{2*1.1 - 2*(1.1)^(-8)+(1.1)^(10)*((\x)+(1.1)^(-18)});
\draw [line width=1.2pt,domain=-1:-(1.2)^(-18)] plot(\x,{2*1.2 - 2*1.2*sqrt((-\x))}) node{$\scriptstyle\bullet$};
\draw [line width=1.2pt,domain=-(1.2)^(-18):1] plot(\x,{2*1.2 - 2*(1.2)^(-8)+(1.2)^(10)*((\x)+(1.2)^(-18)});
\draw [smooth,samples=50,line width=1.2pt,domain=-1:-(1.3)^(-18)] plot(\x,{2*1.3 - 2*1.3*sqrt((-\x))}) node{$\scriptstyle\bullet$};
\draw [line width=1.2pt,domain=-(1.3)^(-18):0.3] plot(\x,{2*1.3 - 2*(1.3)^(-8)+(1.3)^(10)*((\x)+(1.3)^(-18)});
\draw [smooth,samples=50,line width=1.2pt,domain=-1:-(1.4)^(-18)] plot(\x,{2*1.4 - 2*1.4*sqrt((-\x))}) node{$\scriptstyle\bullet$};
\draw [line width=1.2pt,domain=-(1.4)^(-18)-0.001:0.2] plot(\x,{2*1.4-2*(1.4)^(-8)+(1.4)^(10)*((\x)+(1.4)^(-18)}); %The right end point has been manually moved, so that the graphs looks good.
%increase "samples" in "smooth,samples=34,"  the end
\begin{scriptsize}
%\draw[color=black] (-2.64,-2.81) node {$f$};
\end{scriptsize}
\end{tikzpicture}
\caption{Solution for $c\in \{1.01, 1.1, 1.2, 1.3, 1.4\}$ in $[-1, 1]$. The parameters are  $p=1.1$, $q=2$ and $\alpha =0.5$. The right boundary values have been partly cut away but they are in the range $[2, 32]$. The 
dots indicate $x_0$.}\label{fig:solutions}
\end{figure}
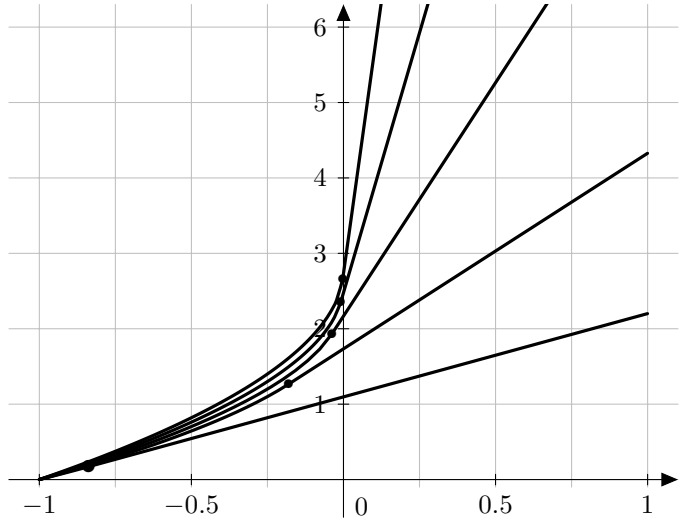

For $c>0$ and $r>x_0$, we next consider a solution with $u(-x_0-2r)=0$. When 
$\frac\alpha{q-1}\ne 1$ we have, for $\rho\in [0,r]$ and $\alpha_2:=1-\frac\alpha{1-q}$,  
\[
u(-x_0-\rho) 
= 
c^{\frac1{q-1}} \int_{-x_0-2r}^{-x_0-\rho} |x|^{-\frac \alpha{q-1}}\, dx
=
c^{\frac1{q-1}} \frac{(x_0+2r)^{\alpha_2} - (x_0+\rho)^{\alpha_2}}{\alpha_2}
\approx c^{\frac1{q-1}} r^{\alpha_2}.
\]
Furthermore, $u(-x_0+\rho) = u(-x_0) + \rho c^{\frac1{p-1}}$ since the derivative is constant on 
$(-x_0,\infty)$.  
With $c^{\frac1{p-1}-\frac1{q-1}} = c^{\frac{q-p}{(p-1)(q-1)}} = x_0^{-\frac\alpha{q-1}}$, we calculate 
\[
\frac{u(-x_0+r)}{u(-x_0)} 
=
1 + \alpha_2\frac{r c^{\frac1{p-1}-\frac1{q-1}} x_0^{-\alpha_2}}{(1+2r/x_0)^{\alpha_2} - 1}
=
1 + \alpha_2 \frac{r/x_0}{(1+2r/x_0)^{\alpha_2} - 1}.
\]
Since $\alpha_2<1$, we see that the constant in the Harnack inequality 
in $B(-x_0,r)$ blows up if $r/x_0\to \infty$. 

We next estimate the $L^s$-norm of $u$ when $s>0$. 
For $\rho\in [\frac r2,2r]$, $u(-x_0+\rho) = u(-x_0) + \rho c^{\frac1{p-1}} \approx rc^{\frac1{p-1}}$.
Since $u\ge 0$ is increasing and since only large values of the function are important when $s>0$, 
we find that 
\[
\|u\|_{L^s(B(-x_0,2r))} 
\approx c^{\frac1{p-1}} r^{1+\frac1s} 
= x_0^{1+\frac1s-\frac\alpha{q-p}} (\tfrac r{x_0})^{1+\frac1s}
\quad\text{and}\quad
\bigg(\fint_{B(-x_0,r)} u^s\, dx\bigg)^\frac1s \approx c^{\frac1{p-1}} r.
\]
Similarly, if $x\in [-x_0- r, -x_0]$, then 
by the earlier formula $u(x) \approx c^{\frac1{q-1}} r^{\alpha_2}$ and, 
since only small values of the function are important when $-s<0$, we obtain that 
%$u'(x) =\big( \frac{c}{x^\alpha}\big)^{\frac1{q-1}}  \gtrsim \big( \frac{c}{r^\alpha}\big)^{\frac1{q-1}}$ so we have
%$ \big( \frac{c}{r^\alpha}\big)^{\frac1{q-1}} r \lesssim u(x) \le c^{\frac1{p-1}} 2r$ in $x\in [-x_0- r, -x_0]$, and thus
\[
\bigg(\fint_{B(-x_0,r)} u^{-s}\, dx\bigg)^{-\frac1s} \approx c^{\frac1{q-1}} r^{\alpha_2}.
\]

We can therefore say in this example that 
\[
\esssup_{B(-x_0,r)} u =u(-x_0+r)\approx \bigg(\fint_{B(-x_0,r)} u^s\, dx\bigg)^\frac1s
\]
and
\[
\essinf_{B(-x_0,r)} u =u(-x_0-r) \approx \bigg(\fint_{B(-x_0,r)} u^{-s}\, dx\bigg)^{-\frac1s}
\]
for every $s>0$, with constant depending on $s$ but not on $c$ or $r$. 
It follows that a possible failure in the Harnack inequality is due to the 
passing over zero. 

%\begin{rem}
%If we consider instead the ball $B(-x_0+\frac r2,\frac r2)$, then we can show that 
%\[
%\esssup_{B(-x_0+\frac r2,\frac r2)} u \approx \bigg(\fint_{B(-x_0+\frac r2,\frac r2)} u^s\, dx\bigg)^\frac1s
%\]
%for all $s>-1$. In this case a possible failure in the Harnack inequality is due to the 
%$\essinf$-estimate. 
%\end{rem}

As was mentioned before, the failure of the weak Harnack inequality happens if $\tfrac r{x_0}\to\infty$, 
for instance if we choose $r:= x_0 \log \frac1{x_0}$. Furthermore, 
\[
\|u\|_{L^s(B(-x_0,2r))} 
\approx 
x_0^{1+\frac1s-\frac\alpha{q-p}} (\tfrac r{x_0})^{1+\frac1s}
=
x_0^{1+\frac1s-\frac\alpha{q-p}} (\log\tfrac1{x_0})^{1+\frac1s}
\] 
remains bounded as $x_0\to 0$ if $1+\frac1s-\frac\alpha{q-p} > 0$. 
Since $n=1$, this is equivalent to $\alpha<(1+\frac ns)(p-q)$, the complement of the inequality in 
Corollary~\ref{cor:doublePhase}. This shows the sharpness of the \aones{s_*} assumption 
in Theorem~\ref{thm:weakHarnack}.

In addition, we have $\phi(x,u')\approx u' \phi'(x,u')=c u'$. Thus 
\[
\int_{B(-x_0,2r)} \phi(x,u')\, dx = c(u(-x_0+2r) - u(-x_0 -2r))
= c u(-x_0+2r) 
\approx c^{\frac p{p-1}}r
= x_0^{1-\frac{p\alpha}{q-p}} \tfrac r{x_0} .
\]
Here we see that the Harnack inequality does not hold with uniform 
constant even though the $W^{1,\phi}$-norm of $u$ remains bounded 
provided that $\frac{p\alpha}{q-p} < 1$. On the other hand, 
for the double phase functional \aone{} is equivalent to $\frac{p\alpha}{q-p}\ge 1$, 
and we showed above that the weak Harnack inequality holds in this case. 
This proves the sharpness of the \aone{} assumption in Theorem~\ref{thm:weakHarnack}.

%%%%%%%%%%%%%%%%%%%%%%%%%%%%%%%%%%%%%%%%%%%%%%%%%%%%%%%%%%%%%%%%%%%%%%%%%%%%%%%%%%%
%%%%%%%%%%%%%%%%%%%%%%%%%%%%%%%%%%%%%%%%%%%%%%%%%%%%%%%%%%%%%%%%%%%%%%%%%%%%%%%%%%%
%%%%%%%%%%%%%%%%%%%%%%%%%%%%%%%%%%%%%%%%%%%%%%%%%%%%%%%%%%%%%%%%%%%%%%%%%%%%%%%%%%%

\bigskip

\noindent\small{\textsc{A. Benyaiche}}\\
\small{Department of Mathematics, Ibn Tofail University, Kenitra, Morocco}\\
\footnotesize{\texttt{allami.benyaiche@uit.ac.ma}}\\

\noindent\small{\textsc{P. Harjulehto}}\\
\small{Department of Mathematics and Statistics,
FI-20014 University of Turku, Finland}\\
\footnotesize{\texttt{petteri.harjulehto@utu.fi}}\\

 \noindent\small{\textsc{P. Hästö}\\ 
Department of Mathematics and Statistics,
FI-20014 University of Turku, Finland}\\
\footnotesize{\texttt{peter.hasto@oulu.fi}}\\

\noindent\small{\textsc{A. Karppinen}}\\
\small{Department of Mathematics and Statistics,
FI-20014 University of Turku, Finland}\\
\footnotesize{\texttt{arttu.a.karppinen@utu.fi}}\\

\end{document}